\newtheorem{prop}{Proposition}[section]
\newtheorem{lem}[prop]{Lemma}
\newtheorem{thm}[prop]{Theorem}
\newtheorem{cor}[prop]{Corollary}
\theoremstyle{remark}
\newtheorem{rem}[prop]{Remark}
\newtheorem{ex}[prop]{Example}
\theoremstyle{definition}
\newtheorem{defi}[prop]{Definition}
\newcommand{\cf}{{\mathcal F}}
\newcommand{\cc}{{\mathcal C}}
\newcommand{\cd}{{\mathcal D}}
\newcommand{\cq}{{\mathcal Q}}
\newcommand{\ck}{{\mathcal K}}
\newcommand{\cm}{{\mathcal M}}
\newcommand{\mf}{{\mathbb F}}
\newcommand{\qform}[1]{{\langle{#1}\rangle}} % formes quadratiques
\newcommand{\pform}[1]{{\langle\!\langle{#1}\rangle\!\rangle}} %formes de Pfister
\newcommand{\ra}{\rightarrow}
\DeclareMathOperator{\br}{Br}
\DeclareMathOperator{\Br}{Br}
\DeclareMathOperator{\coker}{coker}
\DeclareMathOperator{\End}{End}
\DeclareMathOperator{\id}{Id}
\DeclareMathOperator{\ind}{ind} % index
\DeclareMathOperator{\SB}{SB}
\DeclareMathOperator{\Spec}{Spec}
\newcommand{\sq}[1]{#1^\times/#1^{\times 2}}
\newcommand{\sfA}{\mathsf{A}}
\newcommand{\sfB}{\mathsf{B}}
\newcommand{\sfC}{\mathsf{C}}
\newcommand{\sfD}{\mathsf{D}}
\newcommand{\PGO}{\mathrm{PGO}}
\newcommand{\PGL}{\mathrm{PGL}}
\DeclareMathOperator{\SL}{SL}
\DeclareMathOperator{\sign}{sign}
\DeclareMathOperator{\psim}{PSim^+}
\newcommand{\CH}{\mathrm{CH}}
\DeclareMathOperator{\ad}{ad} % adjoint involution
\DeclareMathOperator{\Ad}{Ad}
\newcommand{\ba}{\overline{\rule{2.5mm}{0mm}\rule{0mm}{4pt}}} %canonical involution
\newcommand{\darkrad}{0.17}
\newcommand{\lrad}{0.4}
\title[Critical varieties for algebras with involution]{Critical varieties and motivic equivalence for algebras with involution}
\author{Charles de Clercq, Anne Qu\'eguiner-Mathieu and Maksim Zhykhovich}
\date{\today}
\begin{document}

\begin{abstract}
Motivic equivalence for algebraic groups was recently introduced in~\cite{dC}, where a characterization of motivic equivalent groups in terms of higher Tits indexes is given. As a consequence, if the quadrics associated to two quadratic forms have the same Chow motives with coefficients in $\mf_2$, this remains true for any two projective homogeneous varieties of the same type under the orthogonal groups of those two quadratic forms. Our main result extends this to all groups of classical type, and to some exceptional groups, introducing a notion of critical variety. On the way, we prove that motivic equivalence of the automorphism groups of two involutions can be checked after extending scalars to some index reduction field, which depends on the type of the involutions. In addition, we describe conditions on the base field which guarantee that motivic equivalent involutions actually are isomorphic, extending a  result of Hoffmann on quadratic forms. 
\end{abstract}

\maketitle
\section{Introduction and notations}
\label{introduction.section}

Andr\'e Weil proved in the 60's that classical algebraic groups can be described in terms of automorphism groups of central simple algebras with involution~\cite{W}. This is a key tool for important results both on algebraic groups, and on related algebraic objects, such as algebras with involution, or quadratic and hermitian forms. As a major example, we mention Bayer and Parimala's proof of Serre's conjecture II~\cite{BPInventiones}  and of the Hasse principle conjecture II~\cite{BPAnnals}, which provide a  description of the first cohomology group of a simply connected classical group when the base field has cohomological dimension $\leq 2$, or virtual cohomological dimension $\leq 2$, respectively. Besides Galois cohomology, another important tool must be added to this picture, namely the study of projective homogeneous varieties under a semisimple algebraic group. This led, for instance, to the so-called index reduction formulae, due to Merkurjev, Panin and Wadsworth~\cite{MPW1}~\cite{MPW2}.

In this paper, we use a scalar extension to an index reduction field of the underlying algebra to study the Chow motives of projective homogeneous varieties under an algebraic group of classical type, and the relation to cohomological invariants of associated algebraic objects. 
More precisely, recall the notion of motivic equivalence modulo $2$ for algebraic groups introduced by the first named author in~\cite{dC} : two algebraic groups which are inner twisted forms of the same quasi-split group are called motivic equivalent modulo $2$ if the projective homogeneous varieties of the same type have the same Chow motives with coefficients in $\mf_2$. A characterization of motivic equivalent groups is also provided in the same paper, in terms of higher Tits indexes, which consist of the collection of Tits indexes of the given group over all field extensions of the base field. 
As a consequence of this result, one may check that if the quadrics associated to two quadratic forms have the same Chow motives with coefficients in $\mf_2$, this remains true for all projective homogeneous varieties under the orthogonal groups of those two quadratic forms. 
We extend this result to algebraic groups of classical type. More precisely, we introduce a projective homogeneous variety, called a critical variety, and depending only on the type of the group, which plays the same role as a quadric for an orthogonal group, that is which is enough to track down motivic equivalence mod $2$ (see Definition~\ref{critical.def} and Theorem~\ref{main.thm}). Hence, given two algebras with involution of the same type, if the corresponding critical varieties (see Definition~\ref{descriptioncritical.def}) have the same Chow motive with coefficients in $\mf_2$, then this remains true for all projective homogeneous varieties under the automorphism groups of both involutions. As a consequence, we also introduce a notion of motivic equivalence for involutions, and we prove a generalization of Vishik's criterion for motivic equivalence of quadrics (see Corollary~\ref{Vishik.cor}).

It appears that motivic equivalence for two involutions defined on the same algebra can be checked after applying an index reduction process to the underlying central simple algebra. As a consequence, the low-degree cohomological invariants of two motivic equivalent involutions coincide. This can be used to explore the relation between motivic equivalence and isomorphism. Izhboldin proved in~\cite{I} that motivic equivalent odd dimensional quadratic forms are similar, hence the corresponding orthogonal groups are isomorphic. He also gave examples of even dimensional motivic equivalent and non-similar quadratic forms, for which the corresponding adjoint involutions are motivic equivalent and non-isomorphic. In section~\ref{iso.section}, we provide conditions on the base field under which motivic equivalent involutions are isomorphic, extending a result previously proved by Hoffmann in~\cite{Hoffmann} in the quadratic form case. 

The notion of critical variety clearly extends to exceptional groups, where one may also have to consider Chow motives mod $p$ for some odd prime numbers, depending on the type of the group. In the final section~\S\ref{exceptional.section}, we provide examples of critical varieties for motivic equivalence mod $p$ for some types of exceptional groups and some prime numbers $p$. 

As a major tool in this paper, we use the  anisotropy results of Karpenko and Karpenko-Zhykhovich~(\cite{K},~\cite{KZ}), which can be rephrased in terms of $2$-Witt indices of involutions, see Lemma~\ref{Wittindex.lemma}, as well as the main result in~\cite{dC}, which is rephrased in Lemma~\ref{Charles.lem}. \\

\noindent {\bf Acknowledgements.} We are grateful to Skip Garibaldi for his careful reading of a preliminary version of the paper, and his useful comments and suggestions. The third named author also thanks Nikita Semenov for helpful discussions.

\subsection*{Notations}
Throughout this paper, we work over a base field of characteristic different from $2$.
Given an involution $\sigma$ on a central simple algebra $A$ of degree $n$ over a field $K$, we denote by $F$ the subfield of $K$ fixed by $\sigma$, hence $K=F$ if $\sigma$ is orthogonal or symplectic, and $K=F(\sqrt\delta)$ is a quadratic field extension of $F$ if $\sigma$ is unitary. We also allow $A$ to be a direct product of two central simple algebras over $F$, so that its center is the quadratic \'etale algebra $K=F\times F$, endowed with an involution which acts on $F\times F$ as the unique non-trivial $F$-automorphism. If so, $(A,\sigma)$ is isomorphic to the algebra $E\times E^{\mathrm{op}}$ for some $F$ central simple algebra $E$, endowed with the exchange involution $\varepsilon$ defined by $\varepsilon(x,y^{\mathrm{op}})=(y,x^{\mathrm{op}})$, see~\cite[(2.14)]{KMRT}. We then call degree and index of $A$ the degree and the Schur index of $E$, which coincides with the degree and the Schur index of $E^{\mathrm{op}}$, respectively. 
In all cases, {$A$ admits no non trivial $\sigma$-stable two-sided ideal}, and we will say that $(A,\sigma)$ is a central simple algebra with involution over $F$, or an algebra with involution over $F$, for short.
 With this convention,
for any field extension $L/F$, the pair $(A_L,\sigma_L)$ defined by $A_L=A\otimes_F L$ and $\sigma_L=\sigma\otimes \id$ is an algebra with involution over $L$.

We refer the reader to~\cite{KMRT} for more details on algebras with involution and on the corresponding algebraic groups. In  particular, we let $\psim(A,\sigma)$ be the connected component of the identity in the automorphism group of $(A,\sigma)$. It is a semisimple $F$-algebraic group, which is of  type $\sfA$, $\sfB$, $\sfC$ or $\sfD$ depending on the degree of the algebra $A$ and on the type of the involution $\sigma$. Conversely, by~\cite{W} (see also~\cite[\S 26]{KMRT}), any algebraic group of classical type over $F$ is isogeneous to $\psim(A,\sigma)$ for some algebra with involution $(A,\sigma)$ over $F$. 
Note that given two algebras with involution $(A,\sigma)$ and $(B,\tau)$, the corresponding algebraic groups $\psim(A,\sigma)$ and $\psim(B,\tau)$ are inner twisted forms of the same quasi-split group if and only if the involutions have the same type, the algebras have the same degree, and in addition, the algebras have isomorphic center in unitary type, and the involutions have the same discrimininant in orthogonal type. 

All the quadratic forms we consider are supposed to be non degenerate. The invariants are as defined in~\cite{Sch},~\cite{Lam}. In particular, the discriminant $d_\varphi\in\sq F$ of a quadratic form $\varphi$ over $F$ is a signed discriminant, and the Clifford invariant $c(\varphi)\in\br(F)$, {called the Witt invariant in Lam's book, see ~\cite[Chap.3, (3.12)]{Lam}},  is the Brauer class of the full Clifford algebra $\cc(\varphi)$ if $\varphi$ has even dimension, and of its even part $\cc_0(\varphi)$ if $\varphi$ has odd dimension. The Witt index of $q$ is denoted by $i_w(q)$.

Given an algebra with involution $(A,\sigma)$ over $F$, and assuming it has non split center in the unitary case, we call Witt-index of $\sigma$, and we denote by $i_w(\sigma)$, the reduced dimension of the maximal totally isotropic right ideals in $(A,\sigma)$. Therefore, $i_w(\sigma)$ is the largest element in the index of $(A,\sigma)$ as defined in~\cite[\S 6.A]{KMRT}. In particular, if $(A,\sigma)$ is hyperbolic, $A$ has even degree and the Witt index of $\sigma$ equals $\frac12\deg(A)$. If $\sigma$ is unitary, and $A$ has center $F\times F$, we have $(A,\sigma)\simeq (E\times E^{\mathrm {op}},\varepsilon)$, and the involution is hyperbolic~\cite[(6.8)]{KMRT}, even though the algebra $E$ might be of odd degree. Therefore, we define $i_w(\varepsilon)$ to be the integral part of $\frac 12\deg(E)$. Note that, as opposed to what happens in other cases, the Witt index of $\sigma$ coincides with the maximal element in the index of $(A,\sigma)$ if and only if $E$ has even degree and its Schur index divides $\frac 12\deg(E)$ (see~\cite[p. 73]{KMRT}). In particular, if $E$ is division, then even though $\sigma$ is hyperbolic and has non-trivial Witt index, the index of $(A,\sigma)$ is $\{0\}$ and its automorphism group is anisotropic. 

The Witt $2$-index of $(A,\sigma)$,  denoted by $i_{w,2}(\sigma)$, is defined as the maximal value of $i_w(\sigma_{F'})$, where $F'$ runs over all odd degree field extensions of $F$. As explained in~\cite{dCG}, the Tits-index (respectively the $2$-Tits index) of $\psim(A,\sigma)$  is uniquely determined by the Schur index of the algebra $A$ and by $i_w(\sigma)$ (respectively {by the $2$-primary part of the Schur index of $A$} and $i_{w,2}(\sigma)$).

{If $A$ has even degree}, and $\sigma$ has orthogonal type, we let $d_\sigma\in F^\times/F^{\times 2}$ be the discriminant of $\sigma$~\cite[(7.2)]{KMRT}, and $\cc(\sigma)$ be the Clifford algebra of $(A,\sigma)$~\cite[(8.7)]{KMRT}. We denote by $K_\sigma/F$ the quadratic \'etale extension associated to $d_\sigma$, which also is the center of $\cc(\sigma)$~\cite[(8.10)]{KMRT}.
If $d_\sigma=1\in F^\times/F^{\times 2}$, then $\cc(\sigma)$ is a direct product of two central simple algebras over $F$, denoted by $\cc_+(\sigma)$ and $\cc_-(\sigma)$. By the so-called fundamental relations~\cite{KMRT}, we have $[\cc_+(\sigma)]-[\cc_-(\sigma)]=[A]\in\br(F)$. The Clifford invariant of $\sigma$ is \[c(\sigma)=[\cc_+(\sigma)]=[\cc_-(\sigma)]\in\br(F)/\qform{[A]}.\]

If $A$ has even degree, and $\sigma$ is unitary, we let $\cd(\sigma)$ be the discriminant algebra of $\sigma$, which is a central simple algebra over $F$~\cite[(10.28)]{KMRT}. 

Assume now that the field $F$ is formally real, and denote by $X_F$ the space of orderings of $F$. For all $P\in X_F$, we let $\sign_P(\sigma)$ be the signature of $\sigma$ at the ordering $P$ (see~\cite[(11.10)(11.25)]{KMRT} and~\cite[\S 4]{AU}). By definition, $\sign_P(\sigma)$ is a positive integer, whose square is equal to the signature at $P$ of the trace form of $(A,\sigma)$. If $\sigma$ is $K/F$-unitary, with $K=F(\sqrt \alpha)$, then 
\begin{equation}
\label{sign.eq}
\sign_P(\sigma)=0\mbox{ for all }P\in X_F\mbox{ such that }\alpha>_P 0.
\end{equation}
Indeed, over a real closure $F_P$ of $F$ at such an ordering, we have $K\otimes_F F_P=F_P\times F_P$, so that all hermitian forms with values in $(K,\iota)$ become hyperbolic over $F_P$ (see~\cite[3.1(d)]{AU}). This applies in particular to the trace form of $(A,\sigma)$. Therefore, as noticed in~\cite{AU}, one may extend the definition given in~\cite[\S\,11]{KMRT} and~\cite{Q:signature} to the orderings of $F$ that do not extend to $K$, with the convention above.

The motives considered in this paper are Chow motives with coefficients in a ring $\Lambda$, see for instance~\cite{EKM}. In particular, if $X$ is a variety over $F$, we denote in the sequel by $\CH(X;\Lambda)$ and  $M(X;\Lambda)$ the Chow group and motive associated to $X$ with coefficient ring $\Lambda$. We may sometimes omit the coefficient ring when it is clear from the context.

\section{Involutions, low index algebras and index reduction fields}

Assume $A=\End_F(V)$ is a split algebra. Any orthogonal involution on $A$ is adjoint to some quadratic form $q:\,V\ra F$. Conversely, two quadratic forms give rise to the same involution if and only if they are isomorphic up to a scalar factor.
Hence, the study of orthogonal involutions in this case boils down to quadratic form theory, or more precisely to the study of quadratic forms up to similarities. In particular, all invariants of an orthogonal involution on a split algebra can be computed in terms of invariants of quadratic forms. As we now proceed to explain, this is also true for symplectic involutions on an index at most $2$ algebra and unitary involutions on a split algebra. 
Moreover, most invariants of involutions can be computed after an index reduction process, so that they can be expressed in terms of some quadratic form invariants.

\subsection{Invariants of involutions}
\label{lowindex.section}

Let us first recall precisely the situation in the split orthogonal case. For any quadratic form $q:\,V\ra F$, we denote by $\Ad_q=(\End_F(V),\ad_q)$ the associated algebra with involution. It is well known that invariants of orthogonal involutions extend invariants of quadratic forms. More precisely, we have :
\begin{lem}
\label{splitorthogonal.lemma}
Let $q$ be an even-dimensional quadratic form over $F$, and let $\sigma=\ad_q$ be the corresponding adjoint involution.
\begin{enumerate}[label=(\alph*)]
\item $i_w(\sigma){=i_{w,2}(\sigma)}=i_w(q)$;
\item $d_\sigma=d_q\in\sq F$;
\item $\cc(\sigma)$ and $\cc_0(q)$ are canonically isomorphic;
\item $c(\sigma_{K})=c(q_{K})$, where $K$ is the discriminant quadratic extension;
\item For all $P\in X_F$, $\sign_P(\sigma)=|\sign_P(q)|$.
\end{enumerate}
\end{lem}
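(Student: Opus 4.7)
The plan is to dispatch the five items separately, leveraging the explicit dictionary between linear algebra on $V$ and ring-theoretic data on $A=\End_F(V)$. For (a), right ideals of $A$ correspond bijectively to subspaces $W\subseteq V$ via $W\mapsto I_W=\{f\in A\mid f(V)\subseteq W\}$, and $I_W$ is $\sigma$-totally isotropic precisely when $W$ is totally isotropic for $q$. The reduced dimension of $I_W$ is $\dim_F W$, so this gives $i_w(\sigma)=i_w(q)$ straight from the definition of the index of $(A,\sigma)$ recalled in \cite[\S 6.A]{KMRT}. The equality $i_{w,2}(\sigma)=i_w(\sigma)$ then follows because $A$ remains split over every odd-degree extension $L/F$, while Springer's theorem ensures $i_w(q_L)=i_w(q)$.

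Items (b) and (c) are essentially built into the definitions: the discriminant $d_\sigma$ and the Clifford algebra $\cc(\sigma)$ of an orthogonal involution on a split algebra are constructed in \cite[(7.2),(8.7)--(8.10)]{KMRT} precisely so that they recover $d_q$ and $\cc_0(q)$ under the identification $\sigma=\ad_q$. I would simply invoke those constructions, observing that the isomorphism in (c) is canonical by descending the even Clifford algebra along the surjection from tensor algebra constructions used in~\cite{KMRT}.

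For (d), extending scalars to $K$ kills both discriminants by (b), so $\cc_0(q_K)$ splits as $\cc_+(q_K)\times\cc_-(q_K)$, and by (c) this decomposition matches the decomposition $\cc(\sigma_K)=\cc_+(\sigma_K)\times\cc_-(\sigma_K)$. In particular $\cc_+(\sigma_K)\cong\cc_+(q_K)$ as central simple $K$-algebras (the labelling of the two factors is immaterial since in the split case the fundamental relation $[\cc_+]-[\cc_-]=[A_K]=0$ forces the two Brauer classes to coincide). The standard relation for even-dimensional quadratic forms of trivial discriminant provides $[\cc(q_K)]=[\cc_+(q_K)]$ in $\br(K)$, and since $A$ is split the quotient $\br(K)/\qform{[A_K]}$ is just $\br(K)$. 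Combining these,
\[
c(\sigma_K)=[\cc_+(\sigma_K)]=[\cc_+(q_K)]=[\cc(q_K)]=c(q_K).
\]

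Finally, for (e), I would identify the trace form $T_\sigma(f)=\mathrm{Trd}_A(\sigma(f)f)$ of $\sigma=\ad_q$ with $q\otimes q$: diagonalizing $q$ as $\qform{a_1,\dots,a_n}$ and writing $f$ as a matrix in the corresponding basis, a direct computation gives $T_\sigma\cong\qform{a_i^{-1}a_j}_{i,j}\cong\qform{a_ia_j}_{i,j}=q\otimes q$. Over any real closure $F_P$ this yields $\sign_P(T_\sigma)=\sign_P(q)^2$, and since $\sign_P(\sigma)$ is by definition the non-negative square root of $\sign_P(T_\sigma)$, we recover $\sign_P(\sigma)=|\sign_P(q)|$. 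The only genuine subtlety in the whole argument is the Brauer computation inside (d); everything else is either a direct unwinding of definitions or a single citation to~\cite{KMRT}.
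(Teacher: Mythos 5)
Your proof is correct and follows essentially the same route as the paper's: reduce (a) to the ideal/subspace dictionary plus Springer's theorem, cite \cite{KMRT} for (b) and (c), reduce (d) to the trivial-discriminant case and use $\cc(q)\simeq M_2(C)$, $\cc_0(q)\simeq C\times C$, and obtain (e) from the identification $T_\sigma\simeq q\otimes q$. The only cosmetic difference is that for (e) you carry out the trace-form computation explicitly, whereas the paper simply cites \cite[(11.7) and (11.10)]{KMRT}, which encode exactly that calculation.
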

\begin{rem}
The Clifford algebra of an involution, denoted here by $\cc(\sigma)$, corresponds in the split case to the even part $\cc_0(q)$ of the Clifford algebra of $q$. The full Clifford algebra $\cc(q)$ of the quadratic form $q$ is not an invariant up to similarities; therefore, it does not give rise to an invariant of $\sigma$. {Likewise, $\sign(q)$ is not an invariant up to similarities, hence the absolute value is required in (e)}. 
\end{rem}
\begin{proof}
In the situation considered here, the underlying division algebra is $D=F$, therefore by~\cite[p.73]{KMRT}, the maximal element of the index of $(A,\sigma)$ is the Witt-index $i_w(q)$, hence $i_w(\sigma)=i_w(q)$. {Moreover, for any odd degree field extension $F'/F$, by Springer's theorem for quadratic forms (see e.g.~\cite[Chap. VII, Thm. 2.7]{Lam}), we have 
$$i_w(\sigma_{F'})=i_w(q_{F'})=i_w(q)=i_w(\sigma).$$ Hence $i_{w,2}(\sigma)=i_w(\sigma)$ and (a) is proved.}
Assertions (b) and (c) are given in~\cite[(7.3)(3) and (8.8)]{KMRT}.
Since $K$ is the discriminant quadratic extension, in order to prove (d), it is enough to check that $c(\sigma)=c(q)$ when the discriminants are trivial. Under this assumption, we have $\cc(q)\simeq M_2(C)$ for some central simple algebra $C$ over $F$, and $\cc_0(q)\simeq C\times C$ {(see e.g.~\cite[Chap. V, Thm 2.5(3)]{Lam})}. Therefore $[\cc_+(\sigma)]=[\cc_-(\sigma)]=[C]=[\cc(q)]\in\br F$, which gives the required equality.
Finally, if $P$ is any ordering of $F$, we have $\sign_P(\sigma)=|\sign_P(q)|$ by~\cite[(11.10) and (11.7)]{KMRT}.
\end{proof}

Let us assume now that either $\sigma$ is unitary and $A=M_n(K)$, or $\sigma$ is symplectic and $A=M_m(H)$ for some quaternion algebra $H$ over $F$. Let $n$ be the degree of $A$, that is $n=2m$ if $A=M_m(H)$. In both cases, there exists a $2n$-dimensional quadratic form $q_\sigma$ over $F$, which is unique up to a scalar factor, and whose invariants are related to the invariants of $\sigma$, as we now proceed to explain.  

A unitary involution $\sigma$ on the split algebra $M_n(K)$ is adjoint to a rank $n$ hermitian form, denoted by $h_\sigma$, with values in $(K,\ba)$, where $\ba$ denotes the non-trivial $F$-automorphism of the quadratic extension $K/F$. Similarly, a symplectic involution $\sigma$ of the algebra $M_m(H)$ is adjoint to a rank $m$ hermitian form, still denoted by $h_\sigma$, with values in $(H,\ba)$, where $\ba$ stands for the canonical involution of $H$. Let $q_\sigma$ be the trace form of $h_\sigma$, that is the quadratic form defined on $K^n$ (respectively $H^m$), now viewed as an $F$ vector space, by $q_\sigma(x)=h_\sigma(x,x)$. {It has dimension $2n$ in the unitary case, and $4m=2n$ in the symplectic case. Moreover,} as explained in~\cite[Chap.10]{Sch}, the hermitian form $h_\sigma$ is uniquely determined by its trace form $q_\sigma$. Invariants of $\sigma$ and invariants of $q_\sigma$ are related as follows:
\begin{prop}
\label{lowindex.prop} Let $\sigma$ be either a unitary involution of a split algebra $M_n(K)$ or a symplectic involution of an algebra $M_m(H)$ of index at most $2$, and consider as above a trace form $q_\sigma$ of the underlying hermitian form. This form is well defined up to a scalar factor, and its invariants relate to those of $\sigma$ as follows :
\begin{enumerate}[label=(\alph*)]
\item $i_w(\sigma){=i_{w,2}(\sigma)}$ coincides with the integral part of $\frac 12 i_w(q_\sigma)$;
\item If $A$ has even degree and $\sigma$ is unitary, \[d_{q_\sigma}=1\in\sq F\mbox{ and }[\cd(\sigma)]=c(q_\sigma)\in\br(F);\]
\item For all $P\in X_F$, $\sign_P(\sigma)=\frac 12|\sign_P(q_\sigma)|$.
\end{enumerate}
\end{prop}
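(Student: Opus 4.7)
The plan is to diagonalize the underlying hermitian form $h_\sigma$ and reduce each of (a), (b), (c) to an explicit computation on its trace form, then resolve the split (non-division) subcases by a separate Morita-equivalence argument.

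For (a), write $h_\sigma = h_{\rm an}\perp i_w(h_\sigma)\cdot\mathbb H$ and observe that the hyperbolic plane $\mathbb H$ over $(K,\bar{\ })$ or $(H,\gamma)$ has trace form hyperbolic of $F$-dimension $4$, resp.\ $8$; so each hyperbolic summand of $h_\sigma$ contributes $2$, resp.\ $4$, to the Witt index of $q_\sigma$. When $K$ is a field in the unitary case, or $H$ is division in the symplectic case, Jacobson's theorem (see~\cite[Chap.\,10]{Sch}) guarantees that the trace form of $h_{\rm an}$ is anisotropic. Combined with the relation between $i_w(\sigma)$ and $i_w(h_\sigma)$ recalled in~\cite[p.\,73]{KMRT}, this yields $i_w(\sigma)=\tfrac12 i_w(q_\sigma)$. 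The two remaining subcases ($K=F\times F$ in unitary type, $H=M_2(F)$ in symplectic type) correspond to $\sigma$ automatically hyperbolic; Morita equivalence then identifies $h_\sigma$ with an alternating bilinear form, one checks $q_\sigma$ is hyperbolic of dimension $2n$, and the definition of $i_w(\varepsilon)=\lfloor\tfrac12\deg E\rfloor$ recalled in the notations produces precisely the floor. The equality $i_w(\sigma)=i_{w,2}(\sigma)$ follows by applying the same argument over an arbitrary odd-degree extension $F'/F$ and invoking Springer's theorem on $q_\sigma$.

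For (b), diagonalize $h_\sigma=\langle a_1,\dots,a_n\rangle$ over $K=F(\sqrt\alpha)$ with $a_i\in F^\times$. A direct computation gives
\[
q_\sigma\simeq \pform{\alpha}\otimes \langle a_1,\dots,a_n\rangle.
\]
When $n$ is even, the signed discriminant of a Pfister multiple is a square, proving $d_{q_\sigma}=1$. For the Clifford invariant, one applies the standard formula expressing $c(\pform{\alpha}\otimes\varphi)$ as a sum of cup products with $(\alpha)$ in $H^2(F,\mu_2)$; by the explicit description of the discriminant algebra of a diagonalized unitary involution as a $\lambda$-power, see~\cite[(10.33) and \S 10.E]{KMRT}, one matches both expressions in $\br(F)$ term by term.

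For (c), fix $P\in X_F$ and extend scalars to a real closure $F_P$. In the unitary case, if $\alpha>_P 0$ then $\pform{\alpha}$ is hyperbolic at $P$, so $\sign_P(q_\sigma)=0$, in agreement with $\sign_P(\sigma)=0$ from~(\ref{sign.eq}); if $\alpha<_P 0$, then $K\otimes_F F_P\simeq \mathbb C_P$, the Pfister form $\pform{\alpha}$ has signature $2$, so $\sign_P(q_\sigma)=2\sum_i\sign_P(a_i)$, while $h_\sigma$ becomes a hermitian form over $\mathbb C_P$ of signature $|\sum_i\sign_P(a_i)|$. The symplectic case is analogous: if $H$ splits at $P$ both signatures vanish by the Morita computation, and otherwise $H_{F_P}$ is the Hamilton quaternions, whose reduced norm form has signature $4$, giving the factor of $\tfrac12$ after comparison with the signature of $\sigma$ defined through $h_\sigma$.

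The delicate point of the whole argument is the identification in (b) of $c(q_\sigma)$ with $[\cd(\sigma)]$: the discriminant algebra is defined abstractly through the $\lambda$-construction on $A$, and its Brauer class is not a priori visible from a diagonalization of $h_\sigma$; one must appeal to the explicit formulas given in~\cite[\S 10]{KMRT} that already translate the $\lambda$-power in terms of the hermitian coefficients. Once this translation is in hand, the remaining verifications are routine computations with Pfister forms and Morita equivalences.
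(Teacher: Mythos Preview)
Your argument is correct and follows the same overall strategy as the paper: diagonalize $h_\sigma$, compute the trace form explicitly as a tensor product with a $1$-fold Pfister form (unitary case) or a quaternion norm form (symplectic case), and read off each invariant.

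Two minor differences are worth noting. First, for the degenerate subcases ($K=F\times F$ or $H$ split) in~(a), the paper does not invoke Morita equivalence: once the formula $q_\sigma\simeq\qform{1,-\delta}\otimes\qform{\alpha_1,\dots,\alpha_n}$ (resp.\ $n_H\otimes\qform{\alpha_1,\dots,\alpha_m}$) is established, the hyperbolicity of $q_\sigma$ follows immediately from that of the left tensor factor, and one compares directly with the convention $i_w(\sigma)=\lfloor\tfrac12\deg A\rfloor$. Your Morita route reaches the same conclusion but is more roundabout (and your claim that $h_\sigma$ corresponds to an alternating form is accurate for the symplectic split case but not quite the right description in the unitary split-center case).

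Second, you make~(b) sound harder than it is. The paper simply cites~\cite[(10.35)]{KMRT}, which already states that for $\sigma$ adjoint to $\qform{\alpha_1,\dots,\alpha_n}$ on split $M_n(K)$ with $n=2m$, the discriminant algebra is Brauer equivalent to the single quaternion algebra $(\delta,(-1)^m\alpha_1\cdots\alpha_n)$. One then checks directly that this is the Clifford invariant of $\qform{1,-\delta}\otimes\qform{\alpha_1,\dots,\alpha_n}$. There is no need to unwind the $\lambda$-construction or to ``match term by term''; the relevant reference is~(10.35) rather than~(10.33).
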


\begin{proof}
By the same argument as in the orthogonal case, based on Springer's theorem for quadratic forms, to prove (a), it is enough to prove the equality $i_w(\sigma)=\frac12i_w(q_\sigma)$. 
Pick an arbitrary diagonalisation $h_\sigma=\qform{\alpha_1,\dots,\alpha_r}$ of $h_\sigma$, where $r=n$ in the unitary case, and $r=m=\frac n2$ in the symplectic case. The elements $\alpha_i$ are symmetric elements of $(K,\ba)$ or $(H,\ba)$ depending on the type of $\sigma$; hence in both cases, they belong to $F$. An easy computation now shows that
\[q_\sigma\simeq\left\{\begin{array}{ll}
\qform{1,-\delta}\otimes\qform{\alpha_1,\dots,\alpha_n}&\mbox{ if }\sigma\mbox{ is unitary};\\
n_H\otimes \qform{\alpha_1,\dots,\alpha_m}&\mbox{ if }\sigma\mbox{ is symplectic},\\
\end{array}\right.\]
where $n_H$ denotes the norm form of the quaternion algebra $H$. Since $h_\sigma$ is uniquely defined up to a scalar factor, the same holds for $q_\sigma$.

If $K=F\times F$, that is $\delta=1$, or $H$ is split, then the involution $\sigma$ is hyperbolic and $i_w(\sigma)$ is the integral part of $\frac12\deg(A)$.
On the other hand, under those assumptions, $\qform{1,-\delta}$ and $n_H$ are hyperbolic, hence $q_\sigma$ also is, and $i_w(q_\sigma)=\frac12 \dim(q_\sigma)=\deg(A)$. Therefore (a) holds in this case. In addition, if $\sigma$ is symplectic, then $A$ has even degree so that $i_w(\sigma)=\frac 12 i_w(q_\sigma)$. 
Moreover, for any ordering $P$ of $F$, we have $1>_P0$, hence $\sign_P(\sigma)=0$ by~(\ref{sign.eq}) in the unitary case and~\cite[(11.7)]{KMRT} in the symplectic case. Therefore, since $q_\sigma$ is hyperbolic, (c) also holds in this case.

Assume now that $K$ is a field or $H$ is division, depending on the type of the involution.
Combining the explicit description of $q_\sigma$ above with~\cite[Chap.10, Thm1.1 and 1.7]{Sch}, one may observe that any Witt decomposition of the hermitian form $h_\sigma$ gives rise to a Witt decomposition of $q_\sigma$, so that
\[i_w(q_\sigma)=\left\{\begin{array}{ll}
2 i_w(h_\sigma)&\mbox{ if }\sigma\mbox{ is unitary};\\
4 i_w(h_\sigma)&\mbox{ if }\sigma\mbox{ is symplectic}.\\
\end{array}\right.\]
On the other hand, from the description of the index of an algebra with involution given in~\cite[p.73]{KMRT}, $i_w(\sigma)$ is equal to $i_w(h_\sigma)$ in the unitary case, and $2i_w(h_\sigma)$ in the symplectic case, since the index of the underlying algebra is $1$ or $2$, accordingly. Therefore (a) holds.

In order to prove (c), let us consider an ordering $P$ of the field $F$, and pick a real closure $F_P$ of $F$ at this ordering. 
Assume first that $\sigma$ is unitary. 
If $\delta>_P0$, we have $\sign_P(\sigma)=0$ by~(\ref{sign.eq}), hence (1) holds since $q_\sigma$ is hyperbolic over $F_P$. If $\delta<_P0$, the diagonalisation above gives $\sign_P(\sigma)=|\sign_P(h_\sigma)|=\frac12|\sign_P(q_\sigma)|$ as required. Assume now $\sigma$ is symplectic. The quaternion algebra $H$ is split over $F_P$ if and only if its norm form $n_H$ has signature $0$. When these conditions hold, we have by~\cite[(11.11)(2)(b)]{KMRT} $\sign_P(\sigma)=0=\sign_P(q_\sigma)$. Otherwise, $n_H$ is positive definite, and $\sign_P(\sigma)=2|\sign_P(h_\sigma)|=\frac12|\sign_P(q_\sigma)|$. Hence, (c) is now proved.

It only remains to prove assertion (b). Assume $A$ has even degree, $n=2m$, so that the discriminant algebra $\cd(\sigma)$ is well defined. The diagonalisation of $q_\sigma$ given above shows that $d(q_\sigma)=(-\delta)^{2m}=1\in\sq F$. Moreover, by~\cite[(10.35)]{KMRT}, $\cd(\sigma)$ is Brauer equivalent to $(\delta,(-1)^m\alpha_1\dots\alpha_n)$, which is nothing but the Clifford invariant of $q_\sigma$. This concludes the proof of the proposition.
\end{proof}

\subsection{Index reduction function fields}
\label{functionfield.section}
Let $A$ be a central simple algebra of arbitrary index.
The discriminant and the Clifford invariant in the even degree orthogonal case, the discriminant algebra in the even degree unitary case, and the signatures of two involutions of the algebra $A$ can be compared after a scalar extension to  a well chosen function field, which reduces the index of $A$.
More precisely, given an algebra with involution $(A,\sigma)$ over $F$ of type $t$, where $t=o$ (respectively $s$,$u$) stands for orthogonal (respectively symplectic, unitary), we consider the field  $\cf_{A,t}$ and the quadratic form $\cq_\sigma$ over $\cf_{A,t}$ defined as follows.
We let $\cf_{A,o}$ be the function field of the Severi-Brauer variety of $A$ in the orthogonal case, $\cf_{A,s}$ be the function field of the generalized Severi-Brauer variety $\SB_2(A)$ of right ideals of reduced dimension $2$ in the symplectic case, $\cf_{A,u}$ be the function field of the $K/F$-Weil transfer of the Severi-Brauer variety of $A$ in the unitary case with non split center, and $\cf_{A,u}$ be the function field of the Severi-Brauer variety of $E$ if $A\simeq E\times E^{\mathrm{op}}$ with the exchange involution. Hence, $\cf_{A,t}$ is a field extension of $F$ in all cases, and the algebras $A\otimes_F{\cf_{A,o}}$ and $A\otimes_F {\cf_{A,u}}$ are split, while $A\otimes_F{\cf_{A,s}}$ has index at most $2$, and is split if and only if $A$ already is.
Therefore, by \S\,\ref{lowindex.section}, there exists a quadratic form $\cq_\sigma$, defined over $\cf_{A,t}$, unique up to a scalar factor, and of dimension $n$ in the orthogonal case, and $2n$ in the symplectic and unitary cases, which determines the involution $\sigma_{\cf_{A,t}}$ and its invariants. Using the properties of the field $\cf_{A,t}$, we get the following :
\begin{prop}
\label{functionfield.prop}
Let $(A,\sigma,\tau)$ be a central simple algebra with two involutions of the same type $t$ over $F$. Let $\cf_{A,t}$ be the index reduction field defined above, and denote by $\cq_\sigma$ and $\cq_\tau$ the quadratic forms over $\cf_{A,t}$ respectively associated to $\sigma_{\cf_{A,t}}$ and $\tau_{\cf_{A,t}}$, as in \S\,\ref{lowindex.section}.

(a) Assume $A$ has even degree and $\cq_\sigma$ and $\cq_\tau$ have the same discriminant. We denote by $\ck/\cf_{A,t}$ the corresponding quadratic \'etale algebra, and assume in addition that $c((\cq_\sigma)_\ck)=c((\cq_\tau)_\ck)\in\br(\ck)$. Then, the following hold : 
\begin{enumerate}
\item[(i)] If $\sigma$ and $\tau$ are orthogonal, then \[d_\sigma=d_\tau\in\sq F\]\[\mbox{and }c(\sigma_K)=c(\tau_K)\in\br(K)/\qform{[A_K]},\] where $K=F[X]/(X^2-d_\sigma)$ is the discriminant quadratic extension.
\item[(ii)] If $\sigma$ and $\tau$ are unitary, then \[[\cd(\sigma)]=[\cd(\tau)]\in\br(F).\]
\end{enumerate}

(b) Assume that for all $Q\in X_{\cf_{A,t}}$ we have $|\sign_Q(\cq_\sigma)|=|\sign_Q(\cq_\tau)|$. Then for all $P\in X_F$ \[\sign_P(\sigma)=\sign_P(\tau).\]
\end{prop}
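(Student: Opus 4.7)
The general strategy is to transfer every invariant in sight to the index reduction field $\cf := \cf_{A,t}$, where Lemma~\ref{splitorthogonal.lemma} and Proposition~\ref{lowindex.prop} let us identify each invariant of $\sigma_\cf$ (respectively $\tau_\cf$) with the corresponding invariant of $\cq_\sigma$ (respectively $\cq_\tau$). The hypotheses then become equalities of invariants of $\sigma_\cf$ and $\tau_\cf$, and the task reduces to descending these equalities back to $F$ or $K$. The descent is controlled by the Merkurjev--Panin--Wadsworth index-reduction formulae applied to the Severi--Brauer, generalized Severi--Brauer, or Weil transfer variety defining $\cf$.

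For part (a)(i), Lemma~\ref{splitorthogonal.lemma}(b) recasts the discriminant hypothesis as $d_\sigma = d_\tau$ in $\sq\cf$, and I would descend this to an equality in $\sq F$ using the injectivity of the restriction map $\sq F \to \sq\cf$ for the function field of a Severi--Brauer variety. Setting $K := F(\sqrt{d_\sigma})$, the \'etale extension $\ck/\cf$ is the function field of $SB(A_K)$ over $K$, so $\ker(\br(K) \to \br(\ck)) = \qform{[A_K]}$. By Lemma~\ref{splitorthogonal.lemma}(d) applied over $\ck$, the hypothesis on Clifford invariants reads $c(\sigma_\ck) = c(\tau_\ck) \in \br(\ck)$, which lifts to the desired equality $c(\sigma_K) = c(\tau_K)$ in $\br(K)/\qform{[A_K]}$.

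For part (a)(ii), the discriminant algebra commutes with scalar extension, and Proposition~\ref{lowindex.prop}(b) gives $[\cd(\sigma_\cf)] = c(\cq_\sigma)$, so the hypothesis places $[\cd(\sigma)] - [\cd(\tau)]$ in the kernel of $\br(F) \to \br(\cf_{A,u})$. This difference is $2$-torsion, and its restriction to $K$ vanishes because the class $[\cd(\sigma)_K]$ depends only on $A$; the conclusion will come from the MPW index-reduction formula for $R_{K/F}(SB(A))$, which identifies this kernel precisely enough to force the difference to be zero in $\br(F)$. I expect this step to be the main obstacle.

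For part (b), the plan is a case analysis on the ordering $P \in X_F$. If $P$ extends to some $Q \in X_{\cf_{A,t}}$, then signatures are preserved under the real-closed extension, and Lemma~\ref{splitorthogonal.lemma}(e) or Proposition~\ref{lowindex.prop}(c) translates the hypothesis $|\sign_Q(\cq_\sigma)| = |\sign_Q(\cq_\tau)|$ directly into $\sign_P(\sigma) = \sign_P(\tau)$. Such an extension exists unconditionally in the symplectic case (since $A_{F_P}$ has index at most~$2$, so $\SB_2(A)(F_P) \neq \emptyset$), and in the unitary case whenever $K$ is still a field at $P$ (since then $K \otimes_F F_P$ is algebraically closed and splits $A$). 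In the remaining cases one has to show that both signatures vanish automatically: in the unitary case with $K$ split at $P$ this is precisely~\eqref{sign.eq}, while in the orthogonal case with $A_{F_P}$ of index~$2$ it follows from a direct analysis of orthogonal involutions on $M_m(\mathbb{H})$ over $F_P$, whose trace form turns out to have signature zero.
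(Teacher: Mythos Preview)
Your plan follows the paper's proof closely: translate the hypotheses to equalities of invariants of $\sigma_\cf$ and $\tau_\cf$ via Lemma~\ref{splitorthogonal.lemma} and Proposition~\ref{lowindex.prop}, then descend using the Brauer-kernel description for the relevant function field. Part~(a)(i) and part~(b) are essentially identical to the paper's argument (the paper cites \cite[Cor.~2.7, 2.12]{MT} rather than MPW, and in~(b) handles the vanishing cases by citing \cite[(11.11)]{KMRT} directly instead of computing a trace form, but the substance is the same).

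The one place where you anticipate more work than is needed is~(a)(ii). You flag the kernel of $\br(F)\to\br(\cf_{A,u})$ as ``the main obstacle'' and plan to exploit the auxiliary vanishing of $[\cd(\sigma)]-[\cd(\tau)]$ over $K$ together with the detailed MPW structure of that kernel. The paper's argument is shorter: because $A$ carries a $K/F$-unitary involution, the corestriction $\mathrm{cor}_{K/F}[A]$ is already trivial in $\br(F)$, and by \cite[Cor.~2.7, 2.12]{MT} this forces the restriction $\br(F)\to\br(\cf_{A,u})$ to be \emph{injective}. Hence $[\cd(\sigma_\cf)]=[\cd(\tau_\cf)]$ descends to $[\cd(\sigma)]=[\cd(\tau)]$ with no further analysis. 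Your route would work too, but the split-corestriction observation dissolves the obstacle entirely.
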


\begin{proof}
Let us first prove (a) in the orthogonal case. By lemma~\ref{splitorthogonal.lemma}, the assumptions on $\cq_\sigma$ and $\cq_\tau$ guarantee that $\sigma_{\cf_{A,o}}$ and $\tau_{\cf_{A,o}}$ have the same discriminant, corresponding to the quadratic \'etale algebra $\ck/\cf_{A,o}$, and that $c(\sigma_\ck)=c(\tau_\ck)$. Since the discriminant is a functorial invariant, and $F$ is quadratically closed in $\cf_{A,o}$, it follows that $\sigma$ and $\tau$ have the same discriminant. Denote by $K/F$ the corresponding quadratic \'etale algebra.
If $d_\sigma=d_\tau=1\in\sq F$, so that $K=F\times F$ and $\ck=\cf_{A,o}\times \cf_{A,o}$, then $c(\sigma_\ck)=(c(\sigma_{\cf_{A,o}}),c(\sigma_{\cf_{A,o}}))\in\br(\cf_{A,o})\times\br(\cf_{A,o})$, and similarly for $\tau_\ck$. Therefore the assumptions gives \[c(\sigma_{\cf_{A,o}})=c(\tau_{\cf_{A,o}})\in \br(\cf_{A,o}).\] By~\cite[Cor.2.7]{MT}, the kernel of the restriction map $\br(F)\ra\br(\cf_{A,o})$ is the subgroup generated by $[A]$, therefore $\br(F)/\qform{[A]}\ra\br(\cf_{A,o})$ is injective. Hence, we get \[c(\sigma)=c(\tau)\in \br(F)/\qform{[A]}\] which implies (i). Assume now that $K$ is a field.
The quadratic algebra $\ck$ is the compositum of $K$ and $\cf_{A,o}$, or equivalently, the function field over $K$ of the Severi-Brauer variety $A_K$. Hence, by the same argument as above, the map $\br K/\qform{[A_K]}\ra\br\ck$ is injective. Therefore, again in this case, \[c(\sigma_\ck)=c(\tau_\ck)\in\br(\ck)\] implies \[c(\sigma_K)=c(\tau_K)\in \br K/\qform{[A_K]}.\]

Assume now that the assumptions of (a) hold and that $\sigma$ and $\tau$ are unitary. By Proposition~\ref{lowindex.prop}, $\cq_\sigma$ and $\cq_\tau$ have trivial discriminant. Hence $\ck$ is the split quadratic \'etale algebra $\cf_{A,u}\times\cf_{A,u}$ and 
$c((\cq_\sigma)_\ck)=c((\cq_\tau)_\ck)\in\br(\ck)$ implies
\[c(\cq_\sigma)=c(\cq_\tau)\in\br(\cf_{A,u}).\] 
By Proposition~\ref{lowindex.prop}, we get that $[\cd(\sigma_{\cf_{A,u}})]=[\cd(\tau_{\cf_{A,u}})]\in\br(\cf_{A,u})$. Since $A$ admits unitary $K/F$ involutions, its corestriction is split; therefore, by~\cite[Cor.2.7, Cor.2.12]{MT}, the restriction map $\br(F)\ra\br(\cf_{A,u})$ is injective (see also~\cite[Proof of (10.36)]{KMRT}). Hence, we get \[[\cd(\sigma)]=[\cd(\tau)]\in\br(F)\] as required.

As for the signatures, assume $|\sign_Q(\cq_\sigma)|=|\sign_Q(\cq_\tau)|$ for all $Q\in X_{\cf_{A,t}}$ and consider an ordering $P\in X_F$. Pick a real closure $F_P$ of $F$ at the ordering $P$. If $\sigma$ and $\tau$ are orthogonal, then for all ordering $P$ such that $A_{F_P}$ is not split, we have $\sign_P(\sigma)=0=\sign_P(\tau)$ by~\cite[(11.11)]{KMRT}. Consider now $P\in X_F$ such that $A_{F_P}$ is split. The compositum of $\cf_{A,o}$ and $F_P$, which is the function field over $F_P$ of the Severi-Brauer variety of $A_{F_P}$ is a purely transcendental extension of $F_P$. Therefore, the ordering $P$ extends to this field, and by restriction, there exists an ordering $Q\in\cf_{A,o}$ which coincides with $P$ over $F$. Therefore, $\sign_P(\sigma)=\sign_Q(\sigma_{\cf_{A,o}})=|\sign_Q(\cq_\sigma)|$, where the last equality follows from Lemma~\ref{splitorthogonal.lemma},  and similarly for $\tau$. Hence $\sigma$ and $\tau$ do have the same signature at $P$ for all $P\in X_F$.
The argument is similar in the symplectic and unitary cases, and uses Proposition~\ref{lowindex.prop}. If $\sigma$ and $\tau$ are symplectic, they both have trivial signature at $P$ for all $P\in X_F$ such that $A_{F_P}$ is split by~\cite[(11.11)]{KMRT}. Otherwise, $A_{F_P}$ is Brauer equivalent to $(-1,-1)_{F_P}$, which is the only non split division algebra over $F_P$. Therefore, $\SB_2(A)$ has a rational point over $F_P$, its function field is purely transcendental, and the same argument as above applies. Finally, if $\sigma$ and $\tau$ are unitary, they both have trivial signature at any ordering $P\in X_F$ such that $\delta>_P0$ by~(\ref{sign.eq}). Consider now an ordering $P$ such that $\delta<_P0$. The compositum of $F_P$ and $K$ is the unique non trivial quadratic field extension of $F_P$, that is an algebraically closed field. Hence, $A\otimes_F F_P$ is split, so the Weil transfer of $\SB(A)$ has a rational point over $F$ and again the same argument concludes the proof.
\end{proof}

As opposed to what happens for the invariants considered in the previous proposition, it is not known whether the Witt indices can be compared after scalar extension to $\cf_{A,t}$. Nevertheless, we have the following, which is precisely what we need in the sequel, and which can be thought of as a reformulation of Thm 1 and Thm A in~\cite{K} and Thm 6.1 in~\cite{KZ}:
\begin{lem}
\label{Wittindex.lemma}
For any involution $\sigma$ of type $t$ on the algebra $A$, we have
$$i_{w,2}(\sigma)=i_{w,2}(\sigma_{\cf_{A,t}})=i_w(\sigma_{\cf_{A,t}}),$$
where $\cf_{A,t}$ is the function field defined above. Moreover, this index coincide with $i_w(\cq_\sigma)$ if $\sigma$ is orthogonal, and $\frac 12 i_w(\cq_\sigma)$ if it is symplectic or unitary.
\end{lem}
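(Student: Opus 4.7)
The plan is to split the lemma into two separate assertions. The equality $i_{w,2}(\sigma_{\cf_{A,t}})=i_w(\sigma_{\cf_{A,t}})$, together with the explicit formula in terms of $\cq_\sigma$, should follow directly from the low-index material of \S\ref{lowindex.section}. The remaining identity $i_{w,2}(\sigma)=i_w(\sigma_{\cf_{A,t}})$ is the substantive point, where the anisotropy theorems of Karpenko and Karpenko-Zhykhovich enter.

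For the first assertion, observe that by the very construction of $\cf_{A,t}$, the algebra $A_{\cf_{A,t}}$ is split when $t\in\{o,u\}$ and of index at most $2$ when $t=s$. Applying Lemma~\ref{splitorthogonal.lemma}(a) in the orthogonal case and Proposition~\ref{lowindex.prop}(a) in the symplectic and unitary cases to $\sigma_{\cf_{A,t}}$ yields at once $i_{w,2}(\sigma_{\cf_{A,t}})=i_w(\sigma_{\cf_{A,t}})$, as well as its expression in terms of $i_w(\cq_\sigma)$. In the symplectic and unitary settings one still has to verify that the integral part of $\tfrac12 i_w(\cq_\sigma)$ appearing in Proposition~\ref{lowindex.prop}(a) actually coincides with $\tfrac12 i_w(\cq_\sigma)$. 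This is a parity check that follows from the explicit factorisations $\cq_\sigma\simeq\qform{1,-\delta}\otimes\qform{\alpha_1,\dots,\alpha_n}$ or $\cq_\sigma\simeq n_H\otimes\qform{\alpha_1,\dots,\alpha_m}$ obtained in the proof of Proposition~\ref{lowindex.prop}, which force $i_w(\cq_\sigma)$ to be even in the relevant situations.

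The second assertion naturally splits into two inequalities. The inequality $i_{w,2}(\sigma)\leq i_w(\sigma_{\cf_{A,t}})$ is almost immediate: the $2$-Witt index is non-decreasing under scalar extension, so $i_{w,2}(\sigma)\leq i_{w,2}(\sigma_{\cf_{A,t}})$, and the right-hand side equals $i_w(\sigma_{\cf_{A,t}})$ by the first assertion. The reverse inequality $i_{w,2}(\sigma)\geq i_w(\sigma_{\cf_{A,t}})$ is the deep content: any isotropy of $\sigma$ acquired after generically splitting or reducing the index of $A$ must already be realised over some odd-degree extension of $F$. This is precisely the content of Theorems~1 and~A of~\cite{K}, which handle the orthogonal and symplectic types via function fields of (generalised) Severi-Brauer varieties, together with Theorem~6.1 of~\cite{KZ}, which treats the unitary type via the Weil transfer of a Severi-Brauer variety. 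Concretely, I would apply these anisotropy results iteratively to successive totally isotropic right ideals in a Witt decomposition of $\sigma_{\cf_{A,t}}$, lifting each step to a suitable odd-degree extension of $F$ until all isotropy of $\sigma_{\cf_{A,t}}$ has been accounted for. This iterative lifting, and the essential reliance on the cited deep results, is the main obstacle of the argument.
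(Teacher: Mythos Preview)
Your proposal is correct and follows essentially the same route as the paper's proof: apply Lemma~\ref{splitorthogonal.lemma} or Proposition~\ref{lowindex.prop} over $\cf_{A,t}$ for the first assertion, then use monotonicity of the $2$-Witt index for one inequality and an induction based on the Karpenko and Karpenko--Zhykhovich anisotropy theorems for the other. The only point the paper adds explicitly is to single out the split-center unitary case as trivial (there $\sigma$ is already hyperbolic and $\cf_{A,u}$ is not defined via a Weil transfer, so your citation of~\cite[Thm~6.1]{KZ} does not literally apply), which you do not mention but which causes no real difficulty.
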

\begin{proof}
{By definition of $\cf_{A,t}$, Lemma~\ref{splitorthogonal.lemma} or Proposition~\ref{lowindex.prop} apply to the involution $\sigma_{\cf_{A,t}}$, depending on its type. Therefore we have $i_{w,2}(\sigma_{\cf_{A,t}})=i_w(\sigma_{\cf_{A,t}})$, and this index coincides with $i_w(\cq_\sigma)$, $\frac 12 i_w(\cq_\sigma)$, or the integral part of $\frac 12 i_w(\cq_\sigma)$, depending on the type of the involution. 
Since the $2$-Witt index can only increase under extension to $\cf_{A,t}$,} we get
$$i_{w,2}(\sigma)\leq i_{w,2}(\sigma_{\cf_{A,t}})=i_w(\sigma_{\cf_{A,t}}).$$
The converse inequality is clear in the unitary case if the center is $F\times F$. 
In all other cases, by an easy induction argument, it follows from Thm1 and Thm A in~\cite{K} and Thm 6.1 in~\cite{KZ} which precisely state that if $\sigma$ is isotropic over $\cf_{A,t}$, then it also is isotropic over some odd degree field extension of $F$ (see also~\cite[Cor 5.6]{BQ}).
\end{proof}

\section{Motivic equivalence and critical varieties}

The main result of this section asserts that we may associate to any absolutely almost simple algebraic group of classical type a projective homogeneous variety, which we call a critical variety, and whose motive modulo $2$ encodes the motivic equivalence class of the group. 

More precisely, let $G$ be semi-simple algebraic group over $F$; it is an inner twisted form of a given quasi-split group $G_0$. We choose a Borel subgroup of $G_0$ containing a maximal torus $T_0$, and we denote by $\Delta$ the corresponding set of simple roots. We recall from~\cite[\S VI]{dC} the definition of the standard motive of $G$ of type $\Theta$ with coefficients in $\Lambda$, denoted by $M_{\Theta,G}$, where $\Theta$ is any subset of $\Delta$. If $\Theta$ is invariant under the $\ast$-action, $M_{\Theta,G}$ is the motive, with coefficients in $\Lambda$, of the variety $X_{\Theta, G}$ of parabolic subgroups of $G$ of type $\Theta$, as defined in~\cite{BT}.
In general, $M_{\Theta, G}$ is the motive of the corestriction from $F_\Theta$ to $F$ of the variety $X_{\Theta, G_{F_\Theta}}$, where $F_\Theta/F$ is a minimal field extension over which $\Theta$ becomes invariant under the $\ast$-action.
Note that there are two opposite conventions for the parabolic subgroup of type $\Theta$ in the literature; in this paper, a Borel subgroup has type $\Delta$.

Assume now that $G$ and $G'$ are inner twisted forms of the same quasi-split group $G_0$. Recall from~\cite[Def 1]{dC} and~\cite[\S I.3]{dCG} that they are called motivic equivalent with coefficients in $\Lambda$ if there exists an isomorphism $f$ between the respective Dynkin diagrams of $G$ and $G'$, commuting with the Galois actions, and such that the standard motives $M_{\Theta, G}$ and $M_{f(\Theta), G'}$ with coefficients in $\Lambda$ are isomorphic for all $\Theta\subset\Delta$. When this holds for $\Lambda=\mf_p$, $G$ and $G'$ are called motivic equivalent modulo $p$.
The following definition formalizes the notion of critical variety.
\begin{defi}
\label{critical.def}
Let $G$ be an algebraic group, and denote by $G_0$ the corresponding quasi-split group. A twisted flag $G$-variety $X_{\Theta,G}$ is called critical for $G$ modulo $p$ if, for all group $G'$ which is an inner twisted form of the same $G_0$, the following assertions are equivalent :
\begin{enumerate}
\item There exists an isomorphism $f$ between the respective Dynkin diagrams of $G$ and $G'$, commmuting with the Galois action, and such that the standard motives $M_{\Theta,G}$ and $M_{f(\Theta),G'}$ with coefficients in $\mf_p$ are isomorphic; 
\item The algebraic groups $G$ and $G'$ are motivic equivalent modulo $p$.
\end{enumerate}
\end{defi}

\begin{rem}
 By definition of motivic equivalence, condition (2) always implies condition (1). Therefore, $X_{\Theta,G}$ is critical for $G$ if (1) implies (2). 
 Also, note that it may happen, for instance for groups of type $\sfD_n$, that condition (2) holds only for a specific choice of the isomorphism $f$, while condition (1) holds for two different choices of $f$.  
 
 Since the twisted flag varieties under a group $G$ only depend on the isogeny class of $G$, we only consider adjoint groups in the sequel. 
\end{rem}

\begin{ex}
\label{splitodd.ex}
Let $q$ be an odd-dimensional quadratic form; we claim that the quadric $X_q=X_{\{1\},G}$ is critical for the group $G=\PGO^+(q)$ modulo $2$. Indeed, any adjoint group $G'$ of the same type as $G$ is isomorphic to $\PGO^+(q')$ for some quadratic form $q'$ of the same dimension as $q$. Since the Dynkin diagram of $G$ has no automorphism, $f$ must be the identity map, and condition (1) in the definition above means that the quadrics $X_q$ and $X_{q'}$ have isomorphic motives modulo $2$. By a result of Izhboldin~\cite{I}, under this condition, the quadratic forms $q$ and $q'$ are similar. Hence the groups $G$ and $G'$ are isomorphic and condition (2) holds. 
\end{ex}

For other types of groups, motivic equivalence generally differs from isomorphism. This was already noticed by Izhboldin~\cite{I} for even-dimensional quadratic forms; see also \S~\ref{iso.section} below. Nevertheless, we aim at proving that all absolutely almost simple classical groups admit critical varieties, at least for $p=2$, which is the main prime of interest for such groups. In other words,  motivic equivalence modulo $2$ for classical groups can be checked from the isomorphism of standard motives for a given $\Theta\subset\Delta$.

We therefore assume untill the end of this section that $p=2$ and we sometimes write 'motivic equivalence' for motivic equivalence mod $2$. The main result of this section is the following : 

\begin{thm}\label{main.thm}
All absolutely almost simple groups $G$ of classical type admit a critical variety modulo $2$. More precisely considering motivic equivalence modulo $2$, the following twisted flag $G$-varieties are critical :
\begin{enumerate}[label=\roman*)]
    \item If $G$ is of type $\sfA_n$ (with $n\geq 2$), $X_{\{1,n\},G}$ is critical.
    \item If $G$ is of type $\sfB_n$, $X_{\{1\},G}$ is critical.
  \item If $G$ is of type $\sfC_n$ (with $n\geq 2$), $X_{\{2\},G}$ is critical.
    \item If $G$ is of type $\sfD_n$ (with $n\geq 3$), $X_{\{1\},G}$ is critical.
\end{enumerate}
\end{thm}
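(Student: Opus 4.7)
The strategy combines three ingredients: the characterization of motivic equivalence modulo $2$ in terms of higher $2$-Tits indices (Lemma~\ref{Charles.lem}); the index reduction passage to $\cf_{A,t}$ from \S\ref{functionfield.section}, which converts involutions on algebras of arbitrary index into quadratic forms; and the motivic analysis of quadrics in the spirit of Vishik, by which the mod $2$ motive of a projective quadric determines its splitting pattern over all field extensions.

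Fix $G=\psim(A,\sigma)$ of one of the four listed types and let $G'=\psim(A',\tau)$ be an inner twisted form of the same quasi-split group. Assuming condition~(1) of Definition~\ref{critical.def}, my aim is to deduce condition~(2). By Lemma~\ref{Charles.lem}, condition~(2) amounts to saying that, for every field extension $L/F$, the $2$-primary parts of the Schur indices of $A_L$ and $A'_L$ coincide and $i_{w,2}(\sigma_L)=i_{w,2}(\tau_L)$. An isomorphism of standard motives is preserved under arbitrary base change, so it suffices to establish these two equalities over $F$ itself. The first one, concerning Schur indices, will be extracted directly from the motivic hypothesis: in each of the four types, the upper summand of the mod~$2$ motive $M_{\Theta,G}$ captures the relevant Brauer invariant of the underlying algebra, and the assumed isomorphism of standard motives forces these invariants to agree.

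For the equality of $2$-Witt indices, the plan is to extend scalars to $\cf_{A,t}$. By Lemma~\ref{Wittindex.lemma}, $i_{w,2}(\sigma)$ equals the ordinary Witt index of $\sigma_{\cf_{A,t}}$, which in turn is computed from $\cq_\sigma$ through Lemma~\ref{splitorthogonal.lemma} or Proposition~\ref{lowindex.prop}, and analogously for $\tau$. Over $\cf_{A,t}$ the critical variety $X_{\Theta,G}$ becomes a twisted flag variety under a group of low index, whose mod~$2$ motivic decomposition is closely tied to that of the projective quadric of $\cq_\sigma$. The assumed motivic isomorphism then translates into an isomorphism of the mod~$2$ motives of the quadrics of $\cq_\sigma$ and $\cq_\tau$, which by Vishik's characterization of the motive of a quadric forces $i_w(\cq_{\sigma,L'})=i_w(\cq_{\tau,L'})$ for every extension $L'/\cf_{A,t}$. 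Running this argument over each $L/F$ in place of $F$ and invoking Lemma~\ref{Wittindex.lemma} over $L$ then yields the desired equality of $2$-Witt indices over all extensions of $F$.

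I would proceed type by type, beginning with $\sfB_n$, where the critical variety is already a projective quadric and the above strategy is essentially Vishik's theorem, and then $\sfD_n$, where the quadric of $\cq_\sigma$ reappears after extension to $\cf_{A,o}$. The main obstacle will be the symplectic and unitary cases: in type $\sfC_n$ the variety $X_{\{2\},G}$ becomes, over $\cf_{A,s}$, the isotropic Grassmannian of $2$-planes for a hermitian form over a quaternion algebra, and in type $\sfA_n$ the variety $X_{\{1,n\},G}$ becomes a flag variety under a unitary group of a split algebra; in both situations one must identify the relevant upper summand of its mod~$2$ motive with that of the projective quadric of $\cq_\sigma$. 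A further specific complication in type $\sfD_n$ is the role of the non-trivial $\ast$-action and the possible swapping of the two components of the Clifford algebra, which dictates the allowed choice of the isomorphism $f$ between Dynkin diagrams.
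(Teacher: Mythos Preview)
Your overall architecture matches the paper's: extract from the motivic isomorphism the Brauer-type and discriminant constraints on $A,A'$ (this is the content of Propositions~\ref{jpp} and~\ref{jpp2}), then invoke Lemma~\ref{Charles.lem} by verifying equality of $2$-Witt indices over all extensions after passage to an index reduction field. The difference lies in how you propose to carry out the last step in types $\sfC_n$ and $^2\!\sfA_n$. You want to show that the mod~$2$ motive of $X_\sigma$ over $\cf_{A,t}$ determines that of the quadric of $\cq_\sigma$ by identifying their upper summands. But $X_\sigma$ and the quadric of $\cq_\sigma$ are homogeneous under \emph{different} semisimple groups (symplectic or unitary versus orthogonal), so the standard upper-motive criterion of~\cite{K-inner} does not apply, and making this identification rigorous is a nontrivial detour. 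The paper sidesteps this entirely: its Proposition~\ref{motoviclowindex.prop} reads $i_w(\sigma)$ directly off the complete mod~$2$ decomposition of $X_\sigma$ by counting Tate summands, using~\cite[Lemma~3.1]{K-uni} in the split unitary case and an induction based on~\cite[Corollary~15.9]{K2} in the symplectic index-$2$ case. Applied over each extension, this yields the equality of $2$-Witt indices with no reference to the motive of the quadric; the motivic equivalence of $\cq_\sigma$ and $\cq_\tau$ then comes out as a \emph{consequence} via Proposition~\ref{equiv.prop}, not as an intermediate step.

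A few further points. Before Lemma~\ref{Charles.lem} can be invoked in type~$\sfD_n$ you must establish $d_\sigma=d_\tau$; this is part of Proposition~\ref{jpp}, proved there via a quasi-split reduction and the computation of~\cite{tao}, and you should not omit it. In the unitary case $A$ and $A'$ may be genuinely non-isomorphic (Remark~\ref{algebra.rem}), so the relevant index reduction field is the free compositum $\cf$ of $\cf_{A,t}$ and $\cf_{A',t}$, not $\cf_{A,t}$ alone; this is how Proposition~\ref{equiv.prop} is stated. Finally, the complication you flag in type~$\sfD_n$ is mis-identified: in $^1\!\sfD_4$ there is no $\ast$-action (the group is inner), and the issue is rather the $S_3$ triality of the Dynkin diagram. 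The paper handles this in the proof of Theorem~\ref{main.thm} by noting that any $f$ permutes the trialitarian triple $(\tau,\tau_+,\tau_-)$ attached to $(B,\tau)$, so condition~(1) says $\sigma$ is motivic equivalent to one of these three involutions, and Theorem~\ref{sousmain.thm} applied to that involution still gives motivic equivalence of the groups.
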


\begin{rem}
If $n=1$, the group $G$ has a unique twisted flag variety, which obviously is critical; therefore, we may assume $n\geq 2$ in the sequel. 
\end{rem}

All groups of classical type can be described in terms of some  algebra with involution, which is unique except in some low degree cases, see~\cite[\S\,15 \& 42]{KMRT}. Using the description of projective homogeneous varieties in~\cite[\S\,2.4]{MT}, we give the following uniform notation for the varieties appearing in the above theorem:

\begin{defi}
\label{descriptioncritical.def}
Let $(A,\sigma)$ be an algebra with involution over $F$, such that $G=\psim(A,\sigma)$ is as in Theorem~\ref{main.thm}. We let $X_\sigma$ be the $F$-variety defined as follows, depending on the type and degree of $(A,\sigma)$:
\begin{itemize}[label=-]
\item \textit{$\sfA_n$ (with $n\geq 2$).} If $\sigma$ is unitary, and $\deg A \geq 3$, $X_{\sigma}=X_{\{1, \deg A -1\},G}$ is the variety of flags of right ideals $I_1\subset I_2$ of respective reduced dimension $1$ and $\deg(A)-1$ and such that $\sigma(I_1)I_2=\{0\}$. 
\item \textit{$\sfB_n$ (with $n\geq 1$) and $\sfD_n$ (with $n\geq 3$).} If $\sigma$ is orthogonal with $\deg A \geq 3$ and $\deg A\not =4$, $X_{\sigma}=X_{\{1\},G}$ is the variety of isotropic right ideals in $A$ of reduced dimension $1$.
\item \textit{$\sfC_n$ (with $n\geq 2$).} If $\sigma$ is symplectic and  $\deg A \geq 4$,  $X_{\sigma}=X_{\{2\},G}$ is the variety of isotropic right ideals in $A$ of reduced dimension $2$.
\end{itemize}
\end{defi}

\begin{rem}
In the orthogonal case, the variety $X_\sigma$ is the so-called involution variety, previously considered by Tao~\cite{tao}. If in addition the underlying algebra is split, so that $(A,\sigma)=\Ad_q$ for some quadratic form $q$, then $X_\sigma$ is isomorphic to the projective quadric associated to $q$.
\end{rem}

\begin{rem}
The case of a degree $4$ algebra with orthogonal involution is excluded from our discussion, since the corresponding algebraic group, of type $\sfA_1+\sfA_1$, is not absolutely almost simple. In addition there are examples of groups of such type which admit no critical variety. Indeed, consider a quaternion division algebra $Q$ over $F$, and the algebras with involution described in~\cite[(15.2),(15.3)]{KMRT}\[(A,\sigma)=(Q,\ba)\otimes(Q,\ba)\mathrm{\ and\  } (B,\tau)=(Q,\ba)\otimes(M_2(F),\ba),\] where $\ba$ stands for the canonical involution, with corresponding groups
\[
\psim(A,\sigma)\simeq \PGL_1(Q)\times \PGL_2(F)\\ 
{\ \mathrm{ and}\ }\psim(B,\tau)\simeq \PGL_1(Q)\times\PGL_1(Q).\] 
The twisted flag varieties under $G=\psim(A,\sigma)$ and $G'=\psim(B,\tau)$ are the following :  $X_{\{1\},G}\simeq X_{\{2\},G}\simeq \SB(Q)$,  $X_{\Delta,G}\simeq \SB(Q)\times\SB(Q)$, $X_{\{1\},G'}\simeq \SB(Q)$, $X_{\{2\},G'}\simeq {\mathbb P}^1$ and $X_{\Delta,G'}\simeq \SB(Q)\times {\mathbb P}^1$. Therefore, $X_{\{1\},G}\simeq X_{\{1\},G'}\simeq X_{\{2\},G}$ and, by~\cite[Prop. 3.3]{dC-Upper}, $X_{\Delta,G}$ and $X_{\Delta,G'}$ have isomorphic motives modulo $2$. On the other hand, since $Q$ is division, the motive of $X_{\{2\},G}$ is indecomposable, while the motive of $X_{\{2\},G'}$ is a sum of Tate motives, so that $G$ and $G'$ are not motivic equivalent. This proves that none of the three varieties $X_{\{1\},G}, X_{\{2\},G}$ and $X_{\Delta,G}$ is critical for $G$. 
\end{rem}

The following definition extends motivic equivalence for quadratic forms to involutions: 
\begin{defi}
Let $(A,\sigma)$ and $(B,\tau)$ be two algebras with involution of the same type over $F$, and such that the corresponding groups $\psim(A,\sigma)$ and $\psim(B,\tau)$ are as in Theorem~\ref{main.thm}. The involutions $\sigma$ and $\tau$ are called motivic equivalent (denoted $\sigma \overset{{\tiny m}}{~\sim~} \tau$)  if the varieties $X_\sigma$ and $X_\tau$ have isomorphic motives with coefficients in $\mf_2$. 
\end{defi}

We claim that the main theorem follows from the following : 
\begin{thm}
\label{sousmain.thm}
Let $(A,\sigma)$ and $(B,\tau)$ be as in the definition above. If $\sigma$ and $\tau$ are motivic equivalent, then the corresponding groups $\psim(A,\sigma)$ and $\psim(B,\tau)$ are twisted forms of the same quasi-split group, and are motivic equivalent modulo $2$. 
\end{thm}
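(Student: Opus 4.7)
The plan is to combine the characterization of motivic equivalence modulo $2$ for algebraic groups via higher $2$-Tits indices (Lemma~\ref{Charles.lem}) with the computation of $i_{w,2}(\sigma)$ through scalar extension to the index-reduction field (Lemma~\ref{Wittindex.lemma}). Since the hypothesis $M(X_\sigma;\mf_2)\simeq M(X_\tau;\mf_2)$ is preserved under any scalar extension, the real work is to translate it, over $\cf_{A,t}$, into an isomorphism of the mod-$2$ motives of the associated quadrics, from which $i_{w,2}$ can be read; Vishik's criterion for quadrics then yields the desired equalities of Witt indices over every field extension $L/F$.

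I would first verify that $\psim(A,\sigma)$ and $\psim(B,\tau)$ are inner twists of a common quasi-split group. Since the involutions have the same type by assumption, what remains to check is equal degrees of $A$ and $B$, isomorphic centers in unitary type, and $d_\sigma=d_\tau$ in orthogonal even-degree type. Equal degrees are read off from the dimension of the critical variety. For orthogonal even-degree type I would extend scalars to $\cf_{A,o}$, where $X_\sigma$ and $X_\tau$ become the projective quadrics of $\cq_\sigma$ and $\cq_\tau$; motivic equivalence of these quadrics forces equal discriminants (Lemma~\ref{splitorthogonal.lemma}(b) together with Vishik), and Proposition~\ref{functionfield.prop}(a)(i) descends the equality to $F$. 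For unitary type, the center of $A$ is detected from the Galois action on the motivic decomposition of $X_\sigma$, or equivalently after extension to $\cf_{A,u}$ by Proposition~\ref{functionfield.prop}(a)(ii).

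Next, for any field extension $L/F$ I would base change the motivic isomorphism to the index-reduction field $\cf_{A_L,t}$. Over this field, the critical variety $X_{\sigma_L}$ should be motivically equivalent modulo $2$ (via a Morita-type identification stemming from Definition~\ref{descriptioncritical.def}) to the projective quadric of the trace form $\cq_{\sigma_L}$, and similarly for $\tau_L$. The resulting isomorphism of mod-$2$ motives of quadrics, combined with Vishik's criterion, gives $i_w(\cq_{\sigma_L})=i_w(\cq_{\tau_L})$, hence by Lemma~\ref{Wittindex.lemma} $i_{w,2}(\sigma_L)=i_{w,2}(\tau_L)$. Since the first step already ensures that the $2$-primary parts of the Schur indices of $A$ and $B$ coincide, the higher $2$-Tits indices of the two groups agree over every $L/F$, and Lemma~\ref{Charles.lem} then concludes that the groups are motivic equivalent modulo $2$.

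The main obstacle is the identification over $\cf_{A,t}$ of the mod-$2$ motive of the critical variety $X_\sigma$ with that of the projective quadric of $\cq_\sigma$. In the orthogonal case this is essentially classical: over $\cf_{A,o}$ the involution variety becomes the projective quadric. In the symplectic case $X_\sigma=X_{\{2\},G}$ over $\cf_{A,s}$ (where $A$ has index at most $2$) and in the unitary case $X_\sigma=X_{\{1,n-1\},G}$ over $\cf_{A,u}$ (where $A$splits), a careful analysis passing through the underlying hermitian form and its trace form is needed to justify the Morita-type motivic identification. Once this is secured, the Karpenko and Karpenko-Zhykhovich anisotropy results behind Lemma~\ref{Wittindex.lemma}, together with Vishik's criterion, deliver the conclusion uniformly across all four classical types.
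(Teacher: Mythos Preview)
Your overall architecture—reduce via Lemma~\ref{Charles.lem} to comparing $i_{w,2}$ over all extensions, and compute these after index reduction using Lemma~\ref{Wittindex.lemma}—matches the paper's, but two steps do not go through as stated. First, your opening step checks degrees, discriminants (orthogonal), and centers (unitary), but never establishes that $A$ and $B$, or their $2$-primary parts, generate the same subgroup of the Brauer group. Without this, $\cf_{A,t}$ need not reduce the index of $B$ at all, so over that field $X_\tau$ is not governed by any quadric and none of your subsequent identifications apply to $\tau$; in particular your argument for $d_\sigma=d_\tau$ over $\cf_{A,o}$ already presupposes that $B$ splits there. The paper proves this Brauer-group comparison separately in Propositions~\ref{jpp} and~\ref{jpp2}, using type-specific arguments: the cokernel of $\CH^1(X_\sigma)\to\CH^1((X_\sigma)_{\bar F})$ in orthogonal type, the motivic decomposition of isotropic flag varieties from~\cite[Cor.~15.9]{K2} in symplectic type, and upper motives of Severi--Brauer varieties in unitary type.

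Second—and this is the obstacle you yourself flag—the ``Morita-type motivic identification'' of $X_\sigma$ with the projective quadric of $\cq_\sigma$ over $\cf_{A,t}$ does not hold in the symplectic and unitary cases: $X_{\{2\},G}$ for an index-$2$ algebra and $X_{\{1,n-1\},G}$ for a split unitary algebra are not quadrics and are not motivically isomorphic mod~$2$ to the quadric of $\cq_\sigma$ (already their dimensions differ). The paper never attempts such an identification. Instead, Proposition~\ref{motoviclowindex.prop} reads $i_w(\sigma)$ directly from the complete motivic decomposition of $X_\sigma$ itself—via~\cite[Lemma~3.1]{K-uni} in the unitary case, and via an inductive Tate-motive count based on~\cite[Cor.~15.9]{K2} in the symplectic case—so the passage from $M(X_\sigma;\mf_2)\simeq M(X_\tau;\mf_2)$ to equality of Witt indices over every extension never routes through $\cq_\sigma$ outside the orthogonal case.
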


\begin{ex}
\label{spliteven.ex}
Let $q$ and $q'$ be two quadratic forms defined on the same even dimensional vector space $V$, and consider the split algebra $A=\End_F(V)$ endowed with the involutions $\sigma$ and $\tau$ respectively adjoint to $q$ and $q'$. The corresponding varieties are the quadrics $X_\sigma=X_q$ and $X_\tau=X_{q'}$. Therefore, $\sigma$ and $\tau$ are motivic equivalent precisely when the two quadratic forms $q$ and $q'$ are motivic equivalent. 
By a result of Vishik~\cite[Theorem 4.18]{vish-lens} (see also~\cite{K:motivic}), this holds if and only if $q$ and $q'$ have the same Witt index over any extension of the base field $F$. Under this condition, as explained in~\cite[Lemma 2.6]{K:motivic}, the two quadratic forms have the same discriminant, so that the groups $G=\PGO^+(q)$ and $G'=\PGO^+(q')$ are inner twisted forms of the same quasi-split group $G_0$. Moreover, from~\cite[Cor. 16]{dC}, they are motivic equivalent, that is, equality of the Witt indices of $q$ and $q'$ over any extension of the base field implies that non only the quadrics, but twisted flag varieties of all type under $G$ and $G'$ have isomorphic motives. In particular, Theorem~\ref{sousmain.thm} holds for the involutions adjoint to $q$ and $q'$. 

Note that this is not enough to prove that the quadric $X_q$ is critical for $\PGO^+(q)$, since there may exist non split algebras $B$ with orthogonal involution $\tau$ such that the corresponding group $\psim(B,\tau)$ also is an inner twisted form of the same quasi-split group $G_0$. So we still have to check what happens in this case (see Proposition~\ref{jpp} below). 
\end{ex}

\begin{proof}[Proof of theorem~\ref{main.thm}]
Assume Theorem~\ref{sousmain.thm} holds. Let $G$ be an adjoint group as in Theorem~\ref{main.thm}, and $(A,\sigma)$ an algebra with involution such that $G=\psim(A,\sigma)$. We need to prove that the variety $X_\sigma$ is critical for the group $\psim(A,\sigma)$. So, consider a group $G'$ which is an inner twisted form of the same quasi split group $G_0$ as $G$. In particular, there exists an algebra with involution $(B,\tau)$ of the same type as $(A,\sigma)$ such that $G'=\psim(B,\tau)$. 
If $G$ is not of type $^1\sfD_4$, any isomorphism $f$ of the Dynkin diagram of $G$ fixes the subset $\Theta$ defining the variety $X_\sigma$. Therefore, condition (1) of Definition~\ref{critical.def} holds if and only if $\sigma$ and $\tau$ are motivic equivalent. Under this condition, the groups $G$ and $G'$ are motivic equivalent by theorem~\ref{sousmain.thm} and this proves the variety $X_\sigma$ is critical. 

Assume now that $G$ and $G'$ have type $^1\sfD_4$, that is $(A,\sigma)$ and $(B,\tau)$ have degree $8$, orthogonal type and trivial discriminant. As explained in~\cite[\S 42.A]{KMRT}, there exists a triple of degree $8$ algebras with orthogonal involutions with trivial discriminant $\bigl((B,\tau),(C_+,\sigma_+),(C_-,\sigma_-)\bigr)$ such that 
$G'=\psim(B,\tau)=\psim(C_+,\tau_+)=\psim(C_-,\tau_-)$. The algebras with involution $(C_+,\tau_+)$ and $(C_-,\tau_-)$ are the two components of the Clifford algebra of $(B,\tau)$. Moreover, the automorphism group of the Dynkin diagram of $G'$ acts on this triple by permutation (loc. cit. (42.3)). Therefore, condition (1) in Definition~\ref{critical.def} now means that $\sigma$ is motivic equivalent to one of the three involutions $\tau$, $\tau_+$ and $\tau_-$. In all three cases, Theorem~\ref{sousmain.thm} implies that $G$ and $G'$ are motivic equivalent and this concludes the proof. 
\end{proof}

 In the next sections, we provide a proof of Theorem~\ref{sousmain.thm}, which can be thought of as a translation of our main result in terms of algebras with involution. Note that even though the critical varieties does depend on the group, the structure of our proofs for each type follow the same lines. More precisely we will use the generic index reduction fields introduced in~\S\ref{functionfield.section}, and we will prove we can control motivic equivalence on those fields through the study of motivic isomorphisms of some prescribed quadrics.

\subsection{First reductions}

From now on, we consider two algebras with involution $(A,\sigma)$ and $(B,\tau)$ of the same type over $F$. We assume in addition that the groups $G=\psim(A,\sigma)$ and $G'=\psim(B,\tau)$ are as in Theorem~\ref{main.thm}. As explained in  example~\ref{splitodd.ex}, Theorems~\ref{main.thm} and ~\ref{sousmain.thm} are already known in two cases. 
As we already mentioned in~\ref{splitodd.ex}, they do hold if $G$ has type $\sfB_n$, that is if $A$ and $B$ are split and $\sigma$ and $\tau$ are adjoint to some odd dimensional quadratic forms by~\cite{I}. 
If $G$ has type $^1\sf A_n$, that is if $A$ and $B$ have center $F\times F$ and $\sigma$ and $\tau$ are unitary, we have $(A,\sigma)\simeq (E_1\times E_1^{\mathrm{op}}, \varepsilon)$ and $(B,\tau)\simeq (E_2\times E_2^{\mathrm{op}},\varepsilon)$ for some degree $n+1$ central simple algebras $E_1$ and $E_2$ over $F$, and the corresponding automorphism groups are isomorphic to $\SL_1(E_1)$ and $\SL_1(E_2)$ respectively. Therefore the theorems also hold in this case, by~\cite[Thm. 19]{dC}. 
Therefore, we assume until the end of \S~\ref{proof.section} that the algebras $A$ and $B$ have degree at least $3$ if the involutions are unitary, and at least $4$ if they are symplectic. We assume in addition that they have even degree greater than or equal to $6$ if they are orthogonal. In the unitary case, we always assume in addition that the center is non split.

In orthogonal and unitary types, the groups $G$ and $G'$ might be of outer type, in which case they become of inner type over some quadratic extensions $K$ and $K'$ of the base field $F$. The fields $K$ and $K'$ are the respective centers of $A$ and $B$ in unitary type and the quadratic extensions corresponding to the discriminants $d_\sigma$ and $d_\tau$ in orthogonal type. In this section, we prove that if $\sigma$ and $\tau$ are motivic equivalent, then $K=K'$ and the $2$-primary parts of the algebras $A$ and $B$ generate the same subgroup in the Brauer group of their center. For orthogonal and symplectic involutions, this amounts to the following:

\begin{prop}\label{jpp}
Assume $(A, \sigma)$ and $(B, \tau)$ are of the same orthogonal or symplectic type over $F$. If $\sigma$ and $\tau$ are motivic equivalent, then $A$ and $B$ are isomorphic. In addition, in orthogonal type, $\sigma$ and $\tau$ have the same discriminant. \end{prop}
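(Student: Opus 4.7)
The strategy is to recover the Brauer class $[A]$, and in the orthogonal case the discriminant $d_\sigma$, from the mod-$2$ motive of the critical variety $X_\sigma$, using the motivic equivalence $\sigma\overset{m}{\sim}\tau$ to transport these invariants. Since $A$ and $B$ have the same degree, and their Brauer classes are $2$-torsion in orthogonal and symplectic types, the equality $[A]=[B]$ in $\br(F)$ will give $A\simeq B$.

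To compare $[A]$ and $[B]$, the plan is to extend scalars to the index reduction field $L=\cf_{A,t}$ of \S\,\ref{functionfield.section}. Over $L$, the algebra $A_L$ is split (orthogonal type) or of index at most $2$ (symplectic type), so by \S\,\ref{lowindex.section}, $X_\sigma$ becomes, up to twisting by a scalar, the projective quadric attached to a quadratic form $\cq_\sigma$; in particular, its mod-$2$ motive is a direct sum of Tate and Rost summands. The isomorphism $M(X_\sigma;\mf_2)\simeq M(X_\tau;\mf_2)$ is preserved by base change, so $M(X_\tau;\mf_2)_L$ has the same decomposition. Meanwhile $X_\tau$ over $L$ is the critical variety of $(B_L,\tau_L)$, whose upper motive encodes the $2$-primary Schur index of $B_L$ through the Karpenko-type theorems underlying Lemma~\ref{Wittindex.lemma}: a non-split $B_L$ would produce an upper motive whose shape is incompatible with that of a quadric. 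Hence $B_L$ must be split (resp. of index dividing $2$), forcing $[B]\in\ker(\br(F)\to\br(L))=\langle[A]\rangle$ by \cite[Cor.~2.7]{MT}; exchanging the roles of $A$ and $B$ yields $[A]=[B]$ and $A\simeq B$.

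For the discriminant in the orthogonal case, having identified $A$ and $B$ we may treat $\sigma$ and $\tau$ as involutions on the same algebra. Extending scalars to $L=\cf_{A,o}$, both $\sigma_L$ and $\tau_L$ are adjoint to quadratic forms $\cq_\sigma$ and $\cq_\tau$, and motivic equivalence of the involutions becomes motivic equivalence of these quadrics by Example~\ref{spliteven.ex}. Vishik's theorem \cite[Theorem~4.18]{vish-lens} then yields $d_{\cq_\sigma}=d_{\cq_\tau}\in\sq{L}$, and Lemma~\ref{splitorthogonal.lemma} identifies these with $d_{\sigma_L}$ and $d_{\tau_L}$; since $F$ is quadratically closed in the function field of a Severi-Brauer variety, we descend to $d_\sigma=d_\tau\in\sq F$. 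The main obstacle lies in the previous paragraph: establishing that if $M(X_\tau;\mf_2)_L$ is isomorphic to the motive of a split quadric (or a low-index twisted quadric) then $B_L$ is itself split (resp. of index at most $2$). This is a fine indecomposability statement on upper motives of twisted flag varieties, to be extracted from the work of Karpenko and Karpenko-Zhykhovich via Lemma~\ref{Wittindex.lemma}.
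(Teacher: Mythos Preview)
Your overall strategy---recover $[A]$ and $d_\sigma$ from $M(X_\sigma;\mf_2)$ after a suitable base change---is the right instinct, and your treatment of the discriminant in the orthogonal case is essentially correct. But the argument for $A\simeq B$ has real gaps. In the symplectic case, the field $L=\cf_{A,s}=F(\SB_2(A))$ only brings $A$ down to index $\leq 2$, so $X_{\sigma,L}$ is \emph{not} a quadric (it is the variety of rank-$2$ isotropic submodules of a hermitian $(H,\ba)$-module, cf.\ Proposition~\ref{motoviclowindex.prop}), and its motive is not of the shape you describe. Worse, the kernel of $\br(F)\to\br(L)$ is not $\langle[A]\rangle$ when $\ind A>2$: the result \cite[Cor.~2.7]{MT} you invoke is for $\SB(A)$, not $\SB_2(A)$, so even after showing $\ind B_L\leq 2$ you could not conclude $[B]\in\langle[A]\rangle$. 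And in both types, the step you yourself flag as the ``main obstacle''---deducing from the shape of $M(X_\tau;\mf_2)_L$ that $B_L$ has the expected index---is genuinely the heart of the matter, and Lemma~\ref{Wittindex.lemma} does not provide it: that lemma compares Witt indices of a fixed involution across the passage to $\cf_{A,t}$; it says nothing about recovering the Brauer class of the underlying algebra from the motive of $X_\tau$.

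The paper avoids these issues by choosing different inputs. In the symplectic case it base-changes to $F_A=F(\SB(A))$, where $A$ and hence $G$ are \emph{fully} split and $M(X_\sigma)$ becomes a sum of Tate motives; then \cite[Cor.~15.9]{K2} exhibits in $M(X_\tau)_{F_A}$ an indecomposable summand which is a twist of $M(\SB(D))$ for $D$ Brauer equivalent to $B_{F_A}$, forcing $D$ to be split, after which Amitsur's theorem closes the loop. In the orthogonal case it bypasses any upper-motive analysis and reads off $\langle[A]\rangle$ directly as $\coker\bigl(\CH^1(X_\sigma)\to\CH^1((X_\sigma)_{\bar F})\bigr)$ via Tao~\cite{tao}. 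For the discriminant it passes instead to the function field of $X_{\Delta,G}\times X_{\Delta,G'}$, over which both groups are quasi-split and the discriminant shows up as a $\Spec K_\sigma$ summand in the motivic decomposition of $X_\sigma$.
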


\begin{proof}
Assume $\sigma$ and $\tau$ are motivic equivalent, that is $X_\sigma$ and $X_\tau$ have isomorphic motives with coefficients in $\mf_2$. Extending scalars to an algebraic closure of the base field, we already get that the algebras $A$ and $B$ have the same degree. 
Moreover, since $A$ and $B$ are endowed with involutions of the first kind, their period is $2$. Therefore, to ensure that $A$ and $B$ are isomorphic, it is enough to prove they generate the same subgroup of the Brauer group of $F$.\\

\noindent {\it Symplectic case}. Let $F_A$ be the function field of the Severi-Brauer variety of $A$. Since $\sigma$ is symplectic, the group $G=\psim(A,\sigma)$ is split over $F_A$, and the motive $M(X_{\sigma}\times_F F_A)$ is a direct sum of Tate motives. Therefore, $M(X_{\tau}\times_F F_A)$ also is a direct sum of Tate motives, hence it is isotropic. 
So, we may apply \cite[Corollary 15.9]{K2}, which provides a motivic decomposition of $X_{\tau}\times_F{F_A}$ containing an indecomposable direct summand isomorphic to a twist of $M(\SB(D))$, where $D$ is the division $F_A$-algebra Brauer equivalent to $B_{F_A}$.
Since $M(\SB(D))$ is a Tate motive if and only if $D$ is split, we get that $B$ is split over $F_A$. 

By the same argument $A$ is also split over the function field of the Severi-Brauer variety of $B$. Hence $A$ and $B$ generate the same subgroup in $\Br(F)$ by Amitsur's theorem \cite[Theorem 9.3]{amitsur}, and the result is proved in this case.  \\

\noindent {\it Orthogonal case}. If $\sigma$ and $\tau$ are motivic equivalent, then in particular $X_\sigma$ and $X_\tau$ have the same Chow groups with $\mf_2$ coefficients. Therefore, the cokernel of the following maps coincide, where $\bar F$ is an algebraic closure of $F$:
$$\coker\bigl(\CH^1(X_\sigma)\rightarrow \CH^1(X_\sigma\times_F \bar F)\bigr)=\coker\bigl(\CH^1(X_\tau)\rightarrow \CH^1(X_\tau\times_F \bar F)\bigr).$$
On the other hand, as explained in~\cite[Proof of Thm 4.8]{tao}, those cokernels are respectively isomorphic to the subgroups of the Brauer group of $F$ generated by $[A]$ and $[B]$. Hence the algebras $A$ and $B$ generate isomorphic subgroups of the Brauer group, over the base field $F$ and also over any extension of $F$. In particular, each of them is split by the function field of the Severi Brauer variety of the other, and, since they have exponent $2$, it follows $A$ and $B$ are isomorphic. 

It remains to prove that the respective discriminants $d_\sigma$ and $d_\tau$ are equal, or equivalently that the corresponding quadratic extensions $K$ and $K'$ are isomorphic. 
Consider the variety $\mathfrak{X}=X_{\Delta,G}\times X_{\Delta,G'}$ which is the direct product of the varieties of Borel subgroups of both groups $G$ and $G'$. Since $F$ is quadratically closed in the function field $\cf$ of $\mathfrak{X}$, $K$ and $K'$ induce quadratic extensions of $\cf$, which we still denote by $K$ and $K'$, and it is enough to prove they are isomorphic over $\cf$. 
By definition of $\mathfrak{X}$, the groups $G$ and $G'$ are quasisplit over $\cf$, that is $A_\cf$ is split and the involutions $\sigma_\cf$ and $\tau_\cf$ are respectively adjoint to the quadratic forms $q=r\mathbb{H}\oplus\qform{1,-d_\sigma}$ and $q'=r\mathbb{H}\oplus\qform{1,-d_\tau}$, where $\mathbb{H}$ is a hyperbolic plane. Therefore, the motives of $X_\sigma$ and $X_\tau$ respectively decompose over $\cf$ as a sum or Tate motives plus a summand isomorphic to the motive of $\Spec K$ for $X_\sigma$ and $\Spec K'$ for $X_\tau$. Since $\sigma$ and $\tau$ are motivic equivalent, it follows that $K$ and $K'$ are isomorphic and this finishes the proof. 
\end{proof}

The analogue of Proposition \ref{jpp} for unitary involutions is the following: 

\begin{prop}\label{jpp2}
Assume $(A, \sigma)$ and $(B, \tau)$ are of unitary type. 
Let $K$ and $K'$ be the respective centers of $A$ and $B$. If $\sigma$ and $\tau$ are motivic equivalent, then $K$ and $K'$ are isomorphic over $F$. Moreover, under the identification $\Br(K) \simeq \Br(K')$, the $2$-components of $A$ and $B$ generate the same subgroup in $\Br(K)$. 
\end{prop}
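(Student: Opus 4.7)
The plan is to follow the same two-step strategy as in the proof of Proposition~\ref{jpp}: first establish $K \simeq K'$ by a generic splitting argument, then, having identified $K=K'$, compare the $2$-primary components of $A$ and $B$ via a Severi--Brauer function field argument in the spirit of the symplectic case.

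For the isomorphism of centers, I would mimic the discriminant argument of the orthogonal case. Set $\cf$ to be the function field of $\mathfrak{X} = X_{\Delta,G} \times X_{\Delta,G'}$; since $F$ is quadratically closed in $\cf$, it suffices to prove that $K \otimes_F \cf \simeq K' \otimes_F \cf$. Over $\cf$, both $G$ and $G'$ become quasi-split, so $A_\cf$ and $B_\cf$ are split and the involutions $\sigma_\cf$, $\tau_\cf$ are adjoint to hyperbolic Hermitian forms over $K \otimes_F \cf$ and $K' \otimes_F \cf$, respectively. A direct motivic analysis then shows that $M(X_\sigma \times_F \cf;\mf_2)$ decomposes as a sum of Tate motives together with suitable Tate twists of $M(\Spec(K\otimes_F\cf);\mf_2)$, these latter summands arising because the $\ast$-action on the Dynkin diagram of the quasi-split group is encoded by $K$. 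The analogous decomposition on the $\tau$-side features $M(\Spec(K'\otimes_F\cf);\mf_2)$ summands, and the motivic equivalence of $\sigma$ and $\tau$ then forces the two quadratic \'etale algebras to be isomorphic over $F$.

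For the second statement, I would identify $K=K'$ and work over the index reduction field $\cf_{A,u}$, the function field of the Weil transfer of $\SB(A)$ from $K$ to $F$. Over $\cf_{A,u}$, the algebra $A$ is split, so by Proposition~\ref{lowindex.prop} the involution $\sigma_{\cf_{A,u}}$ is adjoint to a Hermitian form with values in $K\otimes_F \cf_{A,u}$. In particular, no non-Tate Severi--Brauer-type summand associated to a non-split division algebra can appear in $M(X_\sigma \times_F \cf_{A,u};\mf_2)$, since all Severi--Brauer summands on the $\sigma$-side arise from $A$, which is split over $\cf_{A,u}$. On the other hand, applying \cite[Corollary 15.9]{K2} in unitary type to $X_\tau \times_F \cf_{A,u}$, the motive of this variety must contain an indecomposable direct summand isomorphic to a twist of the motive of the Severi--Brauer variety of the underlying division algebra $D$ of $B \otimes_F \cf_{A,u}$ over $K\otimes_F \cf_{A,u}$. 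Motivic equivalence of $\sigma$ and $\tau$ then forces the $2$-primary part of $D$ to be trivial, so that $B$ becomes split modulo $2$-primary torsion over $K \otimes_F \cf_{A,u}$. The injectivity of the restriction map modulo $\langle [A]\rangle$, in the spirit of Proposition~\ref{functionfield.prop} and as in~\cite[Cor.~2.7, Cor.~2.12]{MT}, then yields that the $2$-primary part of $[B]$ lies in the subgroup of $\Br(K)$ generated by the $2$-primary part of $[A]$. The symmetric argument, exchanging the roles of $\sigma$ and $\tau$, yields the reverse inclusion.

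The main technical obstacle lies in the motivic bookkeeping on both sides: in the first step, pinning down the precise multiplicities and Tate twists of the $M(\Spec(K\otimes_F\cf))$ summands attached to the parabolic $\Theta = \{1,\deg A -1\}$, so that one can genuinely distinguish $K$ from $K'$ on motives; and in the second step, invoking the correct unitary analogue of \cite[Corollary 15.9]{K2} and confirming that the extracted Severi--Brauer-type summand actually captures the $2$-primary Brauer class of $B$ over $K\otimes_F\cf_{A,u}$, rather than being absorbed into a summand coming from the Hermitian form itself.
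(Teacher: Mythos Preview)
Your first step is essentially the paper's argument; the paper makes the decomposition over $\cf$ precise by invoking \cite[Cor.~7.2]{K-ugr}, which supplies exactly the Tate-plus-shifts-of-$M(\Spec K)$ decomposition you sketch.

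Your second step diverges from the paper and has a real gap, not merely a bookkeeping issue. The symplectic template works because splitting $A$ makes the symplectic group itself \emph{split}, so $M(X_\sigma\times_F F_A)$ is a sum of Tate motives and any non-Tate summand on the $\tau$-side is an immediate contradiction. In the unitary setting, splitting $A$ over $\cf_{A,u}$ leaves $\psim(A,\sigma)_{\cf_{A,u}}$ as the (outer) unitary group of a Hermitian form over $K\otimes_F\cf_{A,u}$, which is generally far from split: the form $h_\sigma$ may be anisotropic and $K$ does not split. Hence $M(X_\sigma\times_F\cf_{A,u};\mf_2)$ is \emph{not} Tate, and you have no mechanism to separate its non-Tate summands from a putative Severi--Brauer summand on the $\tau$-side. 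Your assertion that ``all Severi--Brauer summands on the $\sigma$-side arise from $A$'' is unsupported: the indecomposable pieces of a Hermitian-quadric motive are upper motives of outer-type varieties, and there is no a priori reason these are disjoint from upper motives of Severi--Brauer varieties. On top of this, \cite[Cor.~15.9]{K2} requires $X_\tau$ to be isotropic over $\cf_{A,u}$, which you have not established and which need not hold.

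The paper avoids all of this by extending scalars to $K$ rather than to $\cf_{A,u}$. Over $K$ the group becomes of \emph{inner} type $\sfA$, and $(X_\sigma)_K\simeq X(1,\deg A-1;A)$, $(X_\tau)_K\simeq X(1,\deg B-1;B)$ are ordinary flag varieties. Since each has a rational point over an extension $L/K$ exactly when the corresponding Severi--Brauer variety does, \cite[Cor.~2.15]{K-inner} identifies their upper motives with those of $\SB(A)$ and $\SB(B)$; motivic equivalence then says these upper motives agree, and \cite[Thm.~1]{dC-An} converts that directly into the statement on $2$-primary Brauer classes. No index reduction, no isotropy hypothesis, and no matching of summands is required.
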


\begin{proof}
Assume $\sigma$ and $\tau$ are motivic equivalent. Again, this implies the algebras $A$ and $B$ have the same degree. 
In order to prove that $K$ and $K'$ are isomorphic, we can use the same strategy as in the orthogonal case. 
Consider the variety $\mathfrak{X}=X_{\Delta,G}\times X_{\Delta,G'}$ which is the direct product of the varieties of Borel subgroups of both groups $G$ and $G'$. Since $F$ is quadratically closed in the function field $\cf$ of $\mathfrak{X}$, $K$ and $K'$ induce quadratic extensions of $\cf$, which we still denote by $K$ and $K'$, and it is enough to prove they are isomorphic over $\cf$. By definition of $\mathfrak{X}$, the groups $G$ and $G'$ are quasisplit over $\cf$. Therefore, we may apply~\cite[Cor 7.2]{K-ugr}, which says that the motives of $X_{\sigma}$ and $X_{\tau}$ over $\cf$ respectively decompose into a sum of Tate motives plus summands isomorphic to shifts of the $F$-motive of $\Spec K $ for $X_\sigma$ and $\Spec K' $ for $X_\tau$. Therefore, again the fields $K $ and  $K'$ are isomorphic over $\cf$ hence over $F$. 
From now on we identify $K$ and $K'$. 

After scalar extension to $K$ we have
$$(X_{\sigma})_K \simeq X(1 , \deg(A)-1; A), \, \, \, (X_{\tau})_K \simeq X(1 , \deg(B)-1; B) \, , $$
\noindent where $X(1 , \deg(A)-1; A)$ (resp. $X(1 , \deg(B)-1; B)$) is the variety of flags $I_1 \subset I_2$ of right ideals $I_1$ and $I_2$ in $A$ (resp. in $B$) of reduced dimension $1$ and reduced codimension $1$ (see \cite[Proposition 2.15]{KMRT} and \cite[Lemma 15.5]{K2}).
Since $\sigma$ and $\tau$ are motivic equivalent, those varieties have isomorphic motives with $\mf_2$ coefficients; so in particular, they also have isomorphic upper motives. Since $X(1 , \deg(A)-1; A)$ has a rational point over a field extension $L$ of $K$ if and only if the Severi-Brauer variety $\SB(A)$ also does, their upper motives are isomorphic by~\cite[Cor 2.15]{K-inner}. Similarly, the upper motive of $X(1 , \deg(B)-1; B)$ is isomorphic to that of $\SB(B)$ and we get that the Severi-Brauer varieties $\SB(A)$  and $\SB(B)$ have isomorphic upper motives. Since we work with $\mf_2$ coefficients, it follows by~\cite[Thm 1]{dC-An} that the $2$-primary parts of the algebras $A$ and $B$ generate the same subgroup of the Brauer group of $K$. 
\end{proof}

\begin{rem} 
\label{algebra.rem}
The condition on the algebras $A$ and $B$ in~Proposition~\ref{jpp2} above is optimum. In particular, the algebras $A$ and $B$ generally are non-isomorphic. This was already observed in~\cite{dC} for groups of inner type. More precisely, 
let $E$ be a division algebra of exponent $8$ or a larger $2$-power, and consider the division algebra $E'$ Brauer equivalent to $E^3$. The algebras $E$ and $E'$ are $2$-primary, and generate the same subgroup of the Brauer group of $F$. Therefore, as explained in~\cite[\S X.1]{dC} the groups $\SL_1(E)$ and $\SL_1(E')$ are motivic equivalent. Hence the exchange involutions respectively defined on $A=E\times E^\mathrm{op}$ and $B=E'\times {E'}^\mathrm{op}$ are motivic equivalent. Nevertheless, $E'$ is isomorphic neither to $E$ nor to its opposite, so $A$ and $B$ are non-isomorphic.  
\end{rem}

\subsection{Motivic equivalence and generic index reduction fields}
The main result of this section is the following proposition, which reduces the proof of Theorem~\ref{sousmain.thm} to some low Schur index cases, as we will explain in~\S\ref{proof.section}. This result is a consequence of~\cite[Thm. 16]{dC}, which characterizes motivic equivalent algebraic groups in terms of their higher $2$-Tits indexes. 

\begin{prop}
\label{equiv.prop}
Assume $(A,\sigma)$ and $(B,\tau)$ are of the same type $t$ and have the same degree. If $t=o$, we assume $\sigma$ and $\tau$ have the same discriminant; 
if $t=u$ we assume $A$ and $B$ have isomorphic centers.  
In all three types, we assume in addition that the $2$-primary parts of $A$ and $B$ generate the same subgroup of the Brauer group of their center. 

Consider the associated quadratic forms $\cq_\sigma$ and $\cq_\tau$ over the function fields $\cf_{A,t}$ and $\cf_{B,t}$, as defined in \S\,\ref{functionfield.section}. 
Denoting by $\cf$ the free composite of $\cf_{A,t}$ and $\cf_{B,t}$, the following assertions are equivalent.
\begin{enumerate}
\item $\psim(A,\sigma)$ and $\psim(B,\tau)$ are motivic equivalent; 
\item $\psim(A,\sigma)_\cf$ and $\psim(B,\tau)_\cf$ are motivic equivalent; 
\item $(\cq_\sigma)_{\cf}$\!$\overset{{\tiny m}}{~\sim~}$$(\cq_\tau)_{\cf}$.
\end{enumerate}
\end{prop}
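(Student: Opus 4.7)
The plan is to prove the three implications $(1)\Rightarrow(2)\Rightarrow(3)\Rightarrow(1)$. The implication $(1)\Rightarrow(2)$ is immediate from functoriality of motivic equivalence under base change; the groups remain inner forms of the same quasi-split group over $\cf$ since the type, degree, discriminant (orthogonal case), and center (unitary case) are preserved.

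For $(2)\Leftrightarrow(3)$, I would exploit the fact that $\cf$ already contains both index reduction fields. By construction $A_\cf$ (and hence $B_\cf$, since the $2$-primary parts generate the same subgroup of the Brauer group and the $2$-part of the Schur index stays low on every further extension) is split in the orthogonal and unitary cases and of $2$-index at most $2$ in the symplectic case. Thus for every $L/\cf$ the involutions $\sigma_L$ and $\tau_L$ fall under \S\,\ref{lowindex.section}, and Lemma~\ref{splitorthogonal.lemma}(a) or Proposition~\ref{lowindex.prop}(a) gives $i_{w,2}(\sigma_L)=i_w(\sigma_L)=f(i_w((\cq_\sigma)_L))$ with $f$ equal to the identity or to ``integer part of one half'' depending on the type, and similarly for $\tau$. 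Since $\ind_2(A_L)=\ind_2(B_L)$ for every $L/\cf$, Lemma~\ref{Charles.lem} says that $(2)$ is equivalent to $i_{w,2}(\sigma_L)=i_{w,2}(\tau_L)$ for all $L/\cf$, which via the above reduces to $i_w((\cq_\sigma)_L)=i_w((\cq_\tau)_L)$ for all $L/\cf$. By Vishik's criterion (see Example~\ref{spliteven.ex}) this is precisely $(3)$.

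The main work lies in $(3)\Rightarrow(1)$. By Lemma~\ref{Charles.lem}, I need to show that the $2$-Tits indices of $\psim(A,\sigma)_L$ and $\psim(B,\tau)_L$ coincide for every extension $L/F$. The $2$-parts of the Schur indices agree automatically, so it suffices to prove $i_{w,2}(\sigma_L)=i_{w,2}(\tau_L)$. Lemma~\ref{Wittindex.lemma} applied over $L$ gives
\[ i_{w,2}(\sigma_L)=i_w(\sigma_{L\cdot\cf_{A,t}})\quad\text{and}\quad i_{w,2}(\tau_L)=i_w(\tau_{L\cdot\cf_{B,t}}). \]
The crucial step is to pass to the larger field $L\cdot\cf$ without changing these $i_w$'s. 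Under our assumptions, $A_{L\cdot\cf_{A,t}}$ has reached its minimal $2$-index, so $B_{L\cdot\cf_{A,t}}$ has odd Schur index; hence the variety whose function field realizes $L\cdot\cf$ over $L\cdot\cf_{A,t}$ (the Severi--Brauer or $\SB_2$ of $B$, or the Weil transfer in the unitary case) acquires a closed point of odd degree after an odd-degree splitting of its odd-index algebra. By a Springer-theorem argument — split the odd part over an odd-degree extension $K'/L\cdot\cf_{A,t}$, observe that the function field becomes purely transcendental over $K'$ so $i_w$ of $\cq_\sigma$ is unchanged, then descend via Springer — we get $i_w(\sigma_{L\cdot\cf_{A,t}})=i_w(\sigma_{L\cdot\cf})$, and symmetrically for $\tau$. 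Finally, assumption $(3)$ combined with the equivalence $(2)\Leftrightarrow(3)$ already established yields $i_w(\sigma_{L\cdot\cf})=i_w(\tau_{L\cdot\cf})$, chaining the four equalities into $i_{w,2}(\sigma_L)=i_{w,2}(\tau_L)$ as required.

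The main obstacle is the Springer-style invariance of the Witt index under the extension $L\cdot\cf_{A,t}\hookrightarrow L\cdot\cf$: one has to verify in each of the three types that the variety whose function field provides this extension does carry an odd-degree zero-cycle once the relevant $2$-primary index has been forced down by the first half of the composite. Once that is in place, the rest is a careful bookkeeping of the Tits index data via Lemmas~\ref{Wittindex.lemma} and~\ref{Charles.lem}.
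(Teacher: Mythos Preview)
Your proof is correct and follows essentially the same strategy as the paper's: both reduce everything to Lemma~\ref{Charles.lem} and Lemma~\ref{Wittindex.lemma}, and both hinge on the observation that passing from $\cf_{A,t}$ to the free composite $\cf$ does not affect the Witt index of $\cq_\sigma$. The one substantive difference is in how the odd-primary part of the algebras is handled in the unitary case when proving $(2)\Rightarrow(1)$ (your $(3)\Rightarrow(1)$): the paper first passes once and for all to an odd-degree extension of $F$ over which $A$ and $B$ become $2$-primary (using that motivic equivalence mod $2$ is insensitive to odd-degree extensions), after which $\cf$ is \emph{purely transcendental} over each of $\cf_{A,t}$ and $\cf_{B,t}$ and the chain of equalities from Lemma~\ref{Wittindex.lemma} applies directly; you instead keep the odd part and run a Springer sandwich argument separately at each $L$. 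Both are valid, but the paper's global reduction is cleaner.

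One phrasing caveat: your claim that ``$B_{L\cdot\cf_{A,t}}$ has odd Schur index'' is literally correct only in the unitary case. In orthogonal and symplectic type the hypotheses already force $A\simeq B$ (both have period~$2$ and generate the same cyclic subgroup of $\Br(F)$, hence are Brauer-equivalent), so $\cf_{A,t}=\cf_{B,t}$ and the second half of $\cf$ is purely transcendental over $\cf_{A,t}$ outright, with no Springer step needed; the claim as stated would be false when $\ind B=2$ in the symplectic case, but the argument still goes through trivially there.
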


The proof uses the following lemma: 
\begin{lem} (cf~\cite[Thm. 16]{dC}) 
\label{Charles.lem}
Let $(A,\sigma)$ and $(B,\tau)$ be as in Proposition~\ref{equiv.prop}. In particular, we have $A\simeq B$ in orthogonal and symplectic type. The groups $G=\psim(A,\sigma)$ and $G'=\psim(B,\tau)$ are motivic equivalent if and only if for all field extensions $M/F$ we have
$$i_{w,2}(\sigma_M)=i_{w,2}(\tau_M).$$
\end{lem}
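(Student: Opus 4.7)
The plan is to reduce the statement to a direct application of \cite[Thm.~16]{dC}, which characterizes motivic equivalence modulo $2$ of two inner twisted forms of a common quasi-split group by equality of their higher $2$-Tits indexes over every field extension. The work therefore splits into two tasks: checking that $G$ and $G'$ are inner twisted forms of a single quasi-split group, and translating equality of $2$-Tits indexes over all extensions $M/F$ into equality of $2$-Witt indexes of $\sigma_M$ and $\tau_M$.

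For the first task, I would simply invoke the hypotheses of Proposition~\ref{equiv.prop}. As recalled in the introduction, $\psim(A,\sigma)$ and $\psim(B,\tau)$ are inner twisted forms of the same quasi-split group exactly when $\sigma$ and $\tau$ share their type, $A$ and $B$ their degree, the discriminant in orthogonal type, and the center in unitary type, all of which are part of the assumptions. Consequently \cite[Thm.~16]{dC} applies and motivic equivalence of $G$ and $G'$ is equivalent to the coincidence of the $2$-Tits indexes of $G_M$ and $G'_M$ for every field extension $M/F$.

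For the second task, I would use the description (also recalled in the introduction, after \cite{dCG}) of the $2$-Tits index of $\psim(A,\sigma)_M$ as determined by the $2$-primary part of the Schur index of $A_M$ together with $i_{w,2}(\sigma_M)$. In orthogonal and symplectic type we have $A \simeq B$, so the $2$-primary indexes trivially agree after any extension. In unitary type, the hypothesis that the $2$-primary parts of $[A]$ and $[B]$ generate the same cyclic subgroup of $\Br(K)$ is preserved under scalar extension to $M$; since two generators of a cyclic $2$-group differ by multiplication by an integer coprime to the period, the $2$-primary Schur indexes of $A_M$ and $B_M$ agree over every $M$. Combining these two points, the $2$-Tits indexes of $G_M$ and $G'_M$ coincide if and only if $i_{w,2}(\sigma_M)=i_{w,2}(\tau_M)$, which gives the lemma.

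The argument is thus essentially a direct unwinding of \cite[Thm.~16]{dC} combined with the classification of $2$-Tits indexes of involution groups, and no step should present serious difficulty. The only point where some care is required is the unitary case, where one cannot assume $A \simeq B$ (see Remark~\ref{algebra.rem}) and must rely on the principle that two $2$-primary Brauer classes generating the same cyclic subgroup have the same Schur index; once this standard fact is granted, the proof is formal.
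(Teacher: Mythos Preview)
Your reduction to \cite[Thm.~16]{dC} and your handling of the algebra data (including the unitary case) are fine, and this is exactly how the paper begins. However, there is a genuine gap in the second task: the claim that the $2$-Tits index of $\psim(A,\sigma)_M$ is \emph{determined} by the $2$-primary Schur index of $A_M$ together with $i_{w,2}(\sigma_M)$ is not quite correct in inner type $\sfD_n$. When the discriminant is trivial, the two extreme vertices $n-1$ and $n$ of the Dynkin diagram are distinguished by the two components $\cc_+(\sigma)$ and $\cc_-(\sigma)$ of the Clifford algebra, and it may happen that exactly one of them lies in the $2$-Tits index. Which one does is not recorded by the pair $(\ind_2 A_M,\ i_{w,2}(\sigma_M))$. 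Thus two groups $G$ and $G'$ satisfying your hypotheses can have $2$-Tits indexes over some $M$ that agree only up to the swap $n-1\leftrightarrow n$, and you cannot immediately conclude that the higher $2$-Tits indexes coincide as subsets of a fixed $\Delta$.

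The paper addresses this explicitly. Knowing that the intersections of the $2$-Tits indexes of $G_M$ and $G'_M$ with $\{n-1,n\}$ have the same cardinality for every $M$, one argues by a short domination argument that there exist $i,j\in\{n-1,n\}$ for which the varieties $X_{\{i\},G}$ and $X_{\{j\},G'}$ are equivalent modulo $2$; this supplies the missing compatibility of labelings and allows one to invoke \cite[D\'ef.~6, Thm.~13]{dC} to conclude motivic equivalence. Your write-up needs a paragraph of this kind to close the $\sfD_n$ case; outside of inner $\sfD_n$ your argument is complete and matches the paper's.
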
 

\begin{proof}
The assumptions we made guarantee that the two groups are inner twisted forms of the same quasi split group. Therefore, we may apply~\cite[Thm. 16]{dC}, which says that they are motivic equivalent if and only if they have the same higher Tits $2$-indexes, that is if and only if for all extension $M$ of $F$, the groups $G_M$ and $G'_M$ have the same Tits $2$-index. As explained in~\cite[\S II]{dCG}, the Tits $2$-index of $G$ is empty if the group is anisotropic, and determined by the Schur index of the $2$ primary part of $A$ and the Witt $2$-index $i_{w,2}(\sigma)$ otherwise, except possibly in inner type $\sfD_n$, where we need to specify which of the two extreme vertices belongs to the Tits index when only one of them does.  
Since the $2$-primary parts of $A$ and $B$ generate the same subgroup of the Brauer group of their center, they have the same Schur index over any field extension of their center. Therefore the lemma is proved, except possibly in inner type $\sfD_n$. 

Let us now study this case in more details. From our hypothesis, we already know that the Tits $2$-indexes of both groups intersect $\{n-1,n\}$ in two subsets of the same cardinality. Therefore, at least one of the two varieties $X_{\{n-1\},G'}$ and $X_{\{n\},G'}$ is dominated modulo $2$ by $X_{\{i\},G}$ for $i\in\{n-1,n\}$, in the sense of~\cite[d\'ef. 4]{dC}. Conversely, at least one of $X_{\{n-1\},G}$ and $X_{\{n\}, G}$ is dominated (modulo $2$) by $X_{\{j\},G'}$, for $j\in\{n,n-1\}$.
We claim that that there is a choice of $i\in\{n-1,n\}$ and $j\in\{n-1,n\}$ such that the varieties $X_{\{i\},G}$ and $X_{\{j\},G'}$ are equivalent modulo $2$. Indeed, assume for the sake of contradiction that this is not the case. Then, up to renaming the vertices, $X_{\{n-1\},G'}$ dominates $X_{\{n-1\},G}$, $X_{\{n\},G}$ dominates $X_{\{n-1\},G'}$, $X_{\{n\},G'}$ dominates $X_{\{n\},G}$ and $X_{\{n-1\},G}$ dominates $X_{\{n\},G'}$. By transitivity, we then get that $X_{\{n\},G'}$ also dominates $X_{\{n-1\},G}$, a contradiction since $X_{\{n\},G'}$ and $X_{\{n-1\},G}$ as well as $X_{\{n-1\},G'}$ and $X_{\{n\},G}$ would then be equivalent modulo $2$. Combining this with the assumptions of the Lemma, we get that the groups $G$ and $G'$ are equivalent modulo $2$ in the sense of~\cite[D\'efinition 6]{dC}, hence motivic equivalent modulo $2$ by~{\em loc.\,cit. }Th\'eor\`eme 13.
\end{proof}

\begin{proof}[Proof of proposition~\ref{equiv.prop}]
By scalar extension, (1) clearly implies (2). The converse follows from Lemma~\ref{Charles.lem}. Indeed, let us assume (2) holds. Since motivic equivalence modulo $2$ can be checked after odd-degree field extensions, we may assume in the unitary case that both $A$ and $B$ are of $2$ primary index. Consider a field extension $M/F$, and set $\mathcal{M}$ for the compositum of $M$ and $\cf$, that is the free composite of the function fields of the relevant varieties for $\sigma_M$ and $\tau_M$.
Since $\cm$ is an extension of $\cf$, by Lemma~\ref{Charles.lem}, (2) implies  $i_{w,2}(\sigma_\mathcal{M})=i_{w,2}(\tau_\mathcal{M})$. 
In addition, the condition we made on the algebra $A$ and $B$ guarantee that the field $\cf$ is a purely transcendental extension of $\cf_{A,t}$ and $\cf_{B,t}$, and similarly over $M$. Therefore, applying Lemma~\ref{Wittindex.lemma} to $\sigma_M$ and $\tau_M$, we get
\[i_{w,2}(\sigma_M)=i_{w,2}(\sigma_{\cm})=i_{w,2}(\tau_\cm)=i_{w,2}(\tau_M).\]
It follows that the Witt $2$-indexes of $\sigma_M$ and $\tau_M$ coincide for any field extension $M/F$, which proves (1). 

{We now show that (2) and (3) are equivalent. If the involutions are orthogonal, this follows from Example~\ref{spliteven.ex} since $A_{\cf}$ and $B_\cf$ are split and $\sigma_\cf$ and $\tau_\cf$ are respectively adjoint to $\cq_\sigma$ and $\cq_\tau$. 
Assume now that $\sigma$ and $\tau$ are either symplectic or unitary. 
By Lemma~\ref{Charles.lem}, (2) holds if and only if $\sigma$ and $\tau$ have the same $2$-Witt indices over any extension $\cm$ of $\cf$. In view of Proposition \ref{lowindex.prop} this is equivalent to equality of the Witt indices of the quadratic forms $\cq_{\sigma_{\mathcal{M}}}$ and $\cq_{\tau_{\mathcal{M}}}$ for all $\cm$. As explained in Example~\ref{spliteven.ex}, this in turn characterizes motivic equivalence of $\cq_\sigma$ and $\cq_\tau$. 
}
\end{proof}

\subsection{Proof of Theorem~\ref{sousmain.thm}}
\label{proof.section}
With this in hand, we can now prove Theorem~\ref{sousmain.thm} which asserts that if the involutions $\sigma$ and $\tau$ are motivic equivalent, then the groups $\psim(A,\sigma)$ and $\psim(B,\tau)$ are motivic equivalent. 

So, assume $\sigma$ and $\tau$ are motivic equivalent, that is  $X_\sigma$ and $X_\tau$ have isomorphic motives modulo $2$. By Propositon~\ref{jpp}, the algebras $A$ and $B$ are isomorphic if the involutions are orthogonal or symplectic. Using in addition Proposition~\ref{jpp2}, observe that in all three types, the algebras satisfy the conditions of Proposition \ref{equiv.prop}. Hence, we may extend scalars to $\cf$, or equivalently, we may assume that $A=B$, and it is a split algebra in orthogonal and unitary type, and has index at most $2$ in symplectic type. 

If the involutions are orthogonal, the result now follows from Example~\ref{spliteven.ex}. 
In unitary and symplectic type, we use Lemma~\ref{Charles.lem}, so we need to prove that $i_{w,2}(\sigma_M)=i_{w,2}(\tau_M)$ for all extension $M$ of the base field $F$. Note that because of the condition on the algebra, the Witt $2$-indices and the Witt indices coincide for $\sigma$ and $\tau$ (see~Proposition~\ref{lowindex.prop}). 
In unitary type, as explained in~\cite[(6.3)]{KMRT}, if the quadratic field $K$ splits over $F$, then $\sigma$ and $\tau$ are hyperbolic, hence have maximal Witt index. The same holds in symplectic type if $A_M$ is split. Since $\sigma$ and $\tau$ are motivic equivalent, the varieties $X_\sigma$ and $X_\tau$ have isomorphic motives over the base field $F$, and also over any extension $M$ of $F$. Therefore, the following proposition gives the required equality in all other cases. 

\begin{prop}
\label{motoviclowindex.prop}
{Let $(A,\sigma)$ be an algebra with unitary or symplectic involution. We assume $A$ has index $1$ in unitary type, and $A$ has index $2$ in symplectic type. 
If $\sigma$ is unitary, we assume in addition that $K/F$ is a field extension.} 
Then the complete motivic decomposition of $X_\sigma$ modulo $2$ determines $i_{w}(\sigma)$.
\end{prop}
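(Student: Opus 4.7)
The plan is to apply a Vishik-style criterion after a reduction via Proposition~\ref{lowindex.prop}. Under the low-index hypothesis of the proposition, part~(a) of Proposition~\ref{lowindex.prop} yields $i_w(\sigma)=i_{w,2}(\sigma)=\lfloor i_w(q_\sigma)/2\rfloor$, where $q_\sigma$ is the trace form of the hermitian form underlying $\sigma$. It is therefore enough to prove that the complete motivic decomposition of $X_\sigma$ modulo~$2$ determines $i_w(q_\sigma)$.

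To accomplish this, I would exploit the explicit description of $X_\sigma$: in the unitary case with $A=M_n(K)$ and $K/F$ a field, $X_\sigma$ parametrizes $h_\sigma$-isotropic $K$-lines together with their $h_\sigma$-orthogonal hyperplanes, while in the symplectic case with $A=M_m(H)$ of index~$2$, $X_\sigma$ parametrizes $h_\sigma$-isotropic right $H$-submodules of dimension one in $H^m$. By Karpenko's theory of upper motives for projective homogeneous varieties, every indecomposable summand of $M(X_\sigma;\mf_2)$ is a Tate shift of an upper motive attached to a stage of the generic splitting tower of $\sigma$. Consequently, the complete motivic decomposition records the sequence of successive $2$-Witt indices of $\sigma$ along this tower, which translates through Proposition~\ref{lowindex.prop} at each stage into the successive Witt indices of $q_\sigma$. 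Vishik's theorem (\cite[Theorem 4.18]{vish-lens}), which asserts that the motive of a quadric modulo~$2$ determines its splitting pattern, then recovers $i_w(q_\sigma)$ and hence $i_w(\sigma)$.

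The main obstacle is the precise matching of indecomposable summands of $M(X_\sigma;\mf_2)$ with upper motives along the splitting tower of $q_\sigma$, since these towers may a priori differ for $X_\sigma$ and for $X_{q_\sigma}$. The low-index hypothesis is essential here: it ensures that the trace form $q_\sigma$ determines $h_\sigma$ up to a scalar factor, so that Proposition~\ref{lowindex.prop} applies uniformly at each stage of the tower and the translation between the two splitting patterns becomes faithful.
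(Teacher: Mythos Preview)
Your proposal has a genuine gap. The key assertion --- that the complete motivic decomposition of $X_\sigma$ ``records the sequence of successive $2$-Witt indices of $\sigma$ along this tower'' --- is precisely the content of the proposition, and you have not justified it. Invoking ``Karpenko's theory of upper motives'' does not do the job: indecomposable summands of $M(X_\sigma;\mf_2)$ are shifts of upper motives of various twisted flag varieties for $G$, not objects canonically labelled by stages of a splitting tower, and there is no ready-made statement in that theory which reads off $i_w(\sigma)$ from the list of summands without further input.

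Moreover, the appeal to Vishik's theorem is misplaced. Vishik's theorem goes from the motive of the \emph{quadric} $X_{q_\sigma}$ to the splitting pattern of $q_\sigma$. You never relate $M(X_\sigma;\mf_2)$ to $M(X_{q_\sigma};\mf_2)$; Proposition~\ref{lowindex.prop} compares Witt indices, not motives, so it provides no such bridge. And if you had already extracted the splitting pattern of $\sigma$ from $M(X_\sigma;\mf_2)$, then $i_w(\sigma)$ would simply be its first entry and Vishik's theorem would be superfluous --- so the logical structure of the argument is circular.

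The paper's proof is much more concrete: it shows that the number of Tate summands in the complete decomposition of $X_\sigma$ determines $i_w(\sigma)$. In the split unitary case this is a direct application of \cite[Lemma~3.1]{K-uni}, which characterizes exactly which Tate twists $\mf_2(2i)$ occur in terms of the inequality $i_w(\sigma)>i$. In the symplectic case with $\ind A=2$, one writes an isotropic $h_\sigma$ as $\mathbb{H}(H)\perp(W,h')$ and applies the cellular decomposition of \cite[Corollary~15.9]{K2}; since $H$ is division, none of the summands involving $\SB(H)$ contribute Tate motives, and an induction on $i_w(h_\sigma)$ yields $i_w(\sigma)$ from the Tate-motive count.
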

\begin{proof}
{
Assume first that $\sigma$ is unitary and $A$ is split. By \cite[Lemma 3.1]{K-uni}, for every $i < d/2$ the following holds : $i_w(\sigma)> i$ if and only if the complete motivic decomposition of $X_{\sigma}$ contains the Tate motive $\mathbb{F}_2(2i)$. So the result is proved in this case. }

Assume now $\sigma$ is 
a symplectic involution of the algebra $M_m(H)$, for some quaternion division algebra $H$. The involution $\sigma$ is adjoint to a rank $m$ hermitian form, denoted by $h_\sigma$, defined on the $H$-module $V\simeq H^m$, and with values in $(H,\ba)$, where $\ba$ stands for the canonical involution of $H$. The variety $X_{\sigma}$, is isomorphic to the variety $X(2, (V,h_\sigma))$ of totally isotropic $H$-submodules of $V$ of reduced dimension $2$.
If the hermitian form $h_\sigma$ is isotropic, it decomposes as 
$$(V,h_\sigma)= \mathbb{H}(H) \perp (W, h') \, ,$$ where $\mathbb{H}(H)$ is a hyperbolic plane over $(H,\ba)$. 
 Applying \cite[Corollary 15.9]{K2} to the above decomposition, we obtain a decomposition of the motive of $X_{\sigma}$ as a sum of shifts of the following: two Tate motives, two copies of the motive of the product $\SB(H)\times X(1, (W,h'))$, and the motives of $\SB(H)$ and $X(2, (W,h'))$, where $\SB(H)$ denotes the Severi-Brauer variety of $H$. Note that, since $\ind H=2$, there is no Tate motives in the complete motivic decompositions of $\SB(H)\times X(1, (W,h'))$ and $\SB(H)$. Therefore, denoting by $N(X)$ the number of Tate motives in the complete motivic decomposition of $X$, we get that 
$$N(X_\sigma)= N\bigl(X(2, (V,h_\sigma))\bigr) = 2 + N\bigl(X(2, (W,h'))\bigr)\,.$$
Since in addition $i_w(h_\sigma)=N(X_\sigma)=0$ if $h_\sigma$ is anisotropic, an  induction argument shows that $i_w(\sigma)=2 i_w(h_\sigma) = 2 N(X_{\sigma})$.              {This concludes the proof. }
\end{proof}

\subsection{Generalization of Vishik's theorem}

Using the material developped in this section, we may extend Vishik's celebrated criterion of motivic equivalence~\cite[Theorem 4.18]{vish-lens} (see also~\cite{K:motivic}) from quadratic forms to involutions: 
\begin{cor}
\label{Vishik.cor}
If $(A, \sigma)$ and $(B, \tau)$ are of the same orthogonal or symplectic type over $F$, the involutions $\sigma$ and $\tau$ are motivic equivalent if and only if we have
$$ A \simeq B \, \, \text{  and   } \, \, \, i_{w,2}(\sigma_M)=i_{w,2}(\tau_M)
\mbox{ for all field extensions }M/F.$$

If $(A,\sigma)$ and $(B,\tau)$ are of unitary type, then $\sigma$ and $\tau$ are motivic equivalent if and only if the centers of $A$ and $B$ are isomorphic over $F$, the $2$-primary parts of $A$ and $B$ generate the same subgroup of the Brauer group of their center, and
$$ i_{w,2}(\sigma_M)=i_{w,2}(\tau_M)\mbox{ for all field extensions }M/F. $$
\end{cor}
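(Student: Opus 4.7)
The plan is to derive both directions by combining the algebra-recovery propositions (Propositions~\ref{jpp} and~\ref{jpp2}) with Theorem~\ref{sousmain.thm} and the Tits-index characterization of motivic equivalence contained in Lemma~\ref{Charles.lem}. Motivic equivalence of the involutions is linked to motivic equivalence of the groups $\psim(A,\sigma)$ and $\psim(B,\tau)$ in both directions: one way by Theorem~\ref{sousmain.thm}, and the other way by the trivial observation that $X_\sigma$ and $X_\tau$ are among the twisted flag varieties under the corresponding groups. Once the algebraic hypotheses of Proposition~\ref{equiv.prop} are verified, Lemma~\ref{Charles.lem} converts motivic equivalence of the groups into equality of Witt-$2$-indices over all extensions of $F$, which is precisely the remaining condition to match.

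For the forward direction, starting from $\sigma \overset{{\tiny m}}{~\sim~} \tau$, Proposition~\ref{jpp} delivers $A \simeq B$ (together with $d_\sigma = d_\tau$ in orthogonal type) in the orthogonal and symplectic cases, while Proposition~\ref{jpp2} delivers the isomorphism of centers and the equality of $2$-primary Brauer subgroups in unitary type. This already establishes the algebraic half of the statement. Theorem~\ref{sousmain.thm} next yields motivic equivalence of $\psim(A,\sigma)$ and $\psim(B,\tau)$, and Lemma~\ref{Charles.lem} translates this into the desired equality $i_{w,2}(\sigma_M)=i_{w,2}(\tau_M)$ for every field extension $M/F$.

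For the backward direction, the task is to verify the hypotheses of Lemma~\ref{Charles.lem} and then invoke it. In the symplectic and unitary cases the algebraic conditions of the corollary coincide with those of Proposition~\ref{equiv.prop}, so Lemma~\ref{Charles.lem} applies at once, giving motivic equivalence of the groups and hence of the critical varieties $X_\sigma$ and $X_\tau$. The main obstacle lies in the orthogonal case, where the equality of discriminants $d_\sigma = d_\tau$ is not part of the hypothesis and must be extracted from the Witt-$2$-index equality. The plan is to extend scalars to $\cf_{A,o}$, where $A$ splits and $\sigma, \tau$ become adjoint to the quadratic forms $\cq_\sigma, \cq_\tau$. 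For any extension $N$ of $\cf_{A,o}$, Lemma~\ref{splitorthogonal.lemma}(a) applied over $N$ gives $i_{w,2}(\sigma_N) = i_w((\cq_\sigma)_N)$ and similarly for $\tau$, so the Witt-$2$-index hypothesis translates into equality of Witt indices of $\cq_\sigma$ and $\cq_\tau$ over all extensions of $\cf_{A,o}$. Vishik's theorem for quadratic forms then forces $\cq_\sigma$ and $\cq_\tau$ to be motivic equivalent, whence they share the same discriminant; by Lemma~\ref{splitorthogonal.lemma}(b) and the fact that $F$ is quadratically closed in $\cf_{A,o}$, the equality $d_\sigma = d_\tau$ descends to $F$. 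With this in hand Lemma~\ref{Charles.lem} applies, the groups are motivic equivalent, and in particular $X_\sigma$ and $X_\tau$ have isomorphic motives modulo $2$, as required.
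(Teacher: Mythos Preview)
Your argument follows the paper's proof essentially line by line: the forward direction via Propositions~\ref{jpp}/\ref{jpp2}, Theorem~\ref{sousmain.thm}, and Lemma~\ref{Charles.lem}; the backward direction via Lemma~\ref{Charles.lem}, with the orthogonal discriminant extracted by passing to $\cf_{A,o}$ and invoking the quadratic-form discriminant equality (the paper cites \cite[Lemma~2.6]{K:motivic} directly rather than the full Vishik criterion, but that is the same content).

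There is one point you pass over too quickly. Motivic equivalence of the groups only gives $M(X_{\Theta,G})\simeq M(X_{f(\Theta),G'})$ for \emph{some} Galois-equivariant Dynkin automorphism $f$, not for $f=\id$. In symplectic type $\sfC_n$ and unitary type $\sfA_n$ the subset $\Theta$ defining $X_\sigma$ is fixed by every such $f$, and likewise in orthogonal type $\sfD_n$ for $n\geq 5$ or in outer type $^2\sfD_4$; so your deduction ``hence $X_\sigma$ and $X_\tau$ have isomorphic motives'' is automatic there. The exception is $^1\sfD_4$, where triality can move vertex~$1$. The paper handles this either by observing that the proof of Lemma~\ref{Charles.lem} actually produces an $f$ with $f(1)=1$, or alternatively by a short triality argument showing that if $\sigma$ is motivic equivalent to one of $\tau_+,\tau_-$ then $A\simeq C_\pm$ forces $(A,\tau)\simeq(A,\tau_\pm)$. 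You should insert one of these two justifications at the last step of the orthogonal backward direction.
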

\begin{proof}
Assume first that the involutions $\sigma$ and $\tau$ are motivic equivalent. By Theorem~\ref{sousmain.thm}, the groups $\psim(A,\sigma)$ and $\psim(B,\tau)$ are motivic equivalent. The result follows combining 
Propositions \ref{jpp} and~\ref{jpp2}, which prove the conditions on $A$ and $B$ hold, and Lemma~\ref{Charles.lem} which provides equality of the higher Witt $2$-indices of $\sigma$ and $\tau$. 

Let us now prove the converse. Assume first the involutions $\sigma$ and $\tau$ are symplectic or unitary. All the assumptions of Lemma~\ref{Charles.lem} are satisfied, and we get that the groups $\psim(A,\sigma)$ and $\psim(B,\tau)$ are motivic equivalent. It follows that $\sigma$ and $\tau$ are motivic equivalent, since the subset defining the varieties $X_\sigma$ and $X_\tau$ is fixed under any Galois invariant automorphism of the Dynkin diagram of the underlying groups, so the result is proved in these cases.  In orthogonal type, since $i_{w,2}(\sigma_M)=i_{w,2}(\tau_M)$ for all field extensions $M/F$, we claim the involutions $\sigma$ and $\tau$ have the same discriminant. Indeed, as explained in the proof of Proposition~\ref{functionfield.prop}, we may compute the discriminants $d_\sigma$ and $d_\tau$ after extending scalars to the generic splitting field $\cf_{A,o}$ of the algebra $A$. Over this field, $\sigma$ and $\tau$ are adjoint to some quadratic forms $\cq_\sigma$ and $\cq_\tau$ that have the same Witt indices over all extensions of $\cf_{A,o}$. Therefore, they have the same discriminant by~\cite[Lemma 2.6]{K:motivic}, and this proves $d_\sigma=d_\tau$. Hence again all the assumptions of Lemma~\ref{Charles.lem} are satisfied, and from the proof of this Lemma, we get that the groups $\psim(A,\sigma)$ and $\psim(B,\tau)$ are motivic equivalent for a choice of an identification between the Dynkin diagrams satisfying $f(1)=1$. So the involutions $\sigma$ and $\tau$ are motivic equivalent as required. 
Alternately, one may finish the proof using triality in type $^1\sfD_4$, which is the only type for which there exists $f$ which do not satisfy $f(1)=1$. Indeed, if the groups are motivic equivalent, the involution $\sigma$ is motivic equivalent either to $\tau$, or the the canonical involution of one of the two components of the Clifford algebra $(C_+,\tau_+)\times(C_-,\tau_-)$ of $(A,\tau)$. Assume for instance $\sigma$ is motivic equivalent to $\tau_+$. Then by Proposition~\ref{jpp}, $A\simeq C_+$. Therefore, by~\cite[(42.7)(1)]{KMRT}, the algebra $C_-$ is split, so the involution $\tau_-$ is adjoint to an $8$ dimensional quadratic form $q$ with trivial discriminant. Hence, by triality~\cite[(42.3)]{KMRT}, $(A,\tau)$ and $(A,\tau_+)$ are the two components of the even Clifford algebra of $q$. So they are isomorphic as algebras with involution, and we get that $\sigma$ also is motivic equivalent to $\tau$ as required.

\end{proof}

\subsection{Examples of non-critical varieties}

Projective homogeneous varieties under an absolutely almost simple algebraic group are not always critical. An easy explicit example is obtained by considering the variety of rank $1$ isotropic ideals for an algebra with symplectic involution. Indeed, as explained in~\cite[\S II.4 and \S III]{dCG}, there exists a field $F$, an algebra $A$ over $F$,  and two symplectic involutions $\sigma$ and $\tau$ such that the corresponding algebraic groups have different $2$-Tits indexes. In particular, $\sigma$ and $\tau$ are not motivic equivalent. Over a splitting field of $A$, both involutions are adjoint to a skew-symmetric bilinear form, hence all lines are isotropic. It follows that the varieties $X_{\{1\},\sigma}$ and $X_{\{1\},\tau}$ are both isomorphic to the Severi-Brauer variety of $A$, hence they have isomorphic Chow motives, and this proves $X_{\{1\},\sigma}$ is not critical. 

Excellent quadratic forms provide a more interesting example. Pick a field $F$ and $a,b,c,d\in F^\times$ such that the Pfister forms $\pform{a,b,c}$ and $\pform{a,b,d}$ are anisotropic and non-isomorphic. Consider the Pfister neighbors \[\varphi=\qform{1}+\qform{-c}\pform{a,b}\mbox{ and }\varphi'=\qform{1}+\qform{-d}\pform{a,b}.\]
Both are excellent quadratic forms by~\cite[Thm 28.3]{EKM}, since they are Pfister neighbors of the $3$-fold forms $\pform{a,b,c}$ and $\pform{a,b,d}$ respectively, with the same complement form $\qform{-a,-b,ab}$, which in turn is a Pfister neighbor of $\pform{a,b}$ with one-dimensional complement. 

Moreover, they are odd-dimensional and non similar, so they are not motivic equivalent by~\cite{I} (see Example~\ref{splitodd.ex}). 
Nevertheless, we claim that the varieties of complete flags of isotropic subspaces for $\varphi$ and $\varphi'$ have isomorphic motives. 
Indeed, by~\cite[Cor 2.15]{K-inner}, since for all extension $K/F$ we have 
\[\varphi_K\mbox{ is split}\ \Leftrightarrow\  \varphi'_K \mbox { is split}\ \Leftrightarrow \pform{a,b}_K\mbox{ is hyperbolic},\] 
their upper motives are isomorphic. 
In addition, those two varieties are generically split, hence they have isomorphic motives by~\cite[Thm 5.17]{PSZ}.

More generally, let $\varphi$ and $\varphi'$ be two anisotropic excellent quadratic forms of the same odd dimension, with associated decreasing sequences of Pfister forms $\rho_1 \supset \rho_2 \supset ... \supset \rho_{r-1}$, and $\rho'_1\supset\rho'_2\supset...\supset\rho'_{r-1}$ respectively, as in~\cite[Thm. 28.3]{EKM}. Note that, since $\varphi$ and $\varphi'$ are odd-dimensional, the last term in the sequence of Pfister complement forms have dimension $1$ (that is, with the same notations as in \textit{loc. cit.}, $\dim(\varphi_r)=1=\dim(\varphi'_r)$). 
It follows from \cite[Corollary 7.2]{Kar-Mer} that $\varphi$ and $\varphi'$ are motivic equivalent if and only if $\rho_i\simeq \rho_i'$ for all $i \in [1, r-1]$, while one may prove that the varieties of complete flags have isomorphic motives as soon as $\rho_{r-1}\simeq \rho'_{r-1}$.

\section{Motivic equivalence and isomorphism}
\label{iso.section}

In general, motivic equivalent involutions are not isomorphic. 
As noticed by Izhboldin in~\cite{I}, this may happen already in the split orthogonal case; indeed, there exists even dimensional motivic equivalent quadratic forms that are non-similar. Besides, in the unitary case, two motivic equivalent involutions might be defined on some non-isomorphic algebras. 

Nevertheless, under some condition on the base field, motivic equivalence for two involutions defined on the same algebra does imply isomorphism as we proceed to show.  
To be more precise, recall first that using Bayer and Parimala's proof of the Hasse principle conjecture II~\cite{BPAnnals}, Lewis and Tignol gave necessary and sufficient conditions on the base field $F$ under which cohomological invariants and signatures are enough to classify involutions on a central simple algebra (see~\cite{LT}). In this section, we prove that over a field $F$ satisfying those conditions, motivic equivalent involutions are isomorphic. This extends a previous result on quadratic forms du to Hoffmann~\cite{Hoffmann}.  The key observation is that motivic equivalent involutions have the same invariants. This is proved by Hoffmann~\cite{Hoffmann} in the split orthogonal case, and extends to other cases by our Proposition~\ref{equiv.prop}. More precisely, we have : 
\begin{prop}
\label{invariants.prop}
Let $(A,\sigma,\tau)$ be an algebra with two involutions of the same type over $F$.
If $\sigma$ and $\tau$ are motivic equivalent, then the following hold :
\begin{enumerate}
\item If $A$ has even degree and the involutions are orthogonal, \[d_\sigma=d_\tau\in \sq F\mbox{ and }c(\sigma_K)=c(\tau_K)\in\br(F)/\qform{[A]},\] where $K/F$ is the discriminant quadratic extension;
\item If $A$ has even degree and the involutions are unitary, \[[\cd(\sigma)]=[\cd(\tau)]\in\br(F);\]
\item In all three types, $\sign_P(\sigma)=\sign_P(\tau)$ for all $P\in X_F$.
\end{enumerate}
\end{prop}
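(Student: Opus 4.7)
The plan is to invoke Proposition~\ref{functionfield.prop}, which expresses the invariants of an involution on $A$ in terms of those of its associated quadratic form over the index reduction field $\cf=\cf_{A,t}$. The task therefore reduces to verifying that the quadratic forms $\cq_\sigma$ and $\cq_\tau$ share the three classical invariants relevant to the statement: a common discriminant, a common Clifford invariant over the discriminant extension, and common absolute values of signatures at every ordering of $\cf$.

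To this end, I would first show that $\cq_\sigma$ and $\cq_\tau$ are motivic equivalent as quadratic forms. Since $\sigma$ and $\tau$ are defined on the same algebra $A$, Corollary~\ref{Vishik.cor} applies and the assumed motivic equivalence of $\sigma$ and $\tau$ yields $i_{w,2}(\sigma_M)=i_{w,2}(\tau_M)$ for every field extension $M/F$. Combined with Proposition~\ref{equiv.prop}(3) --- whose hypotheses are ensured by Propositions~\ref{jpp} and~\ref{jpp2} --- this gives the desired motivic equivalence of $\cq_\sigma$ and $\cq_\tau$ over $\cf$. Equivalently, by Vishik's theorem, $i_w((\cq_\sigma)_L)=i_w((\cq_\tau)_L)$ for every extension $L/\cf$.

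Next I would import the standard consequences of motivic equivalence in the quadratic form setting. By~\cite[Lemma 2.6]{K:motivic}, the forms $\cq_\sigma$ and $\cq_\tau$ share a common discriminant (automatic in the unitary case by Proposition~\ref{lowindex.prop}). Over the associated quadratic \'etale extension $\ck/\cf$, both forms lie in $I^2(\ck)$ and remain motivic equivalent, since motivic equivalence is preserved under scalar extension. A refinement of the discriminant argument, essentially due to Hoffmann~\cite{Hoffmann}, then forces $c((\cq_\sigma)_\ck)=c((\cq_\tau)_\ck)\in\br(\ck)$. Finally, since the Witt indices agree over any real closure of $\cf$, we get $|\sign_Q(\cq_\sigma)|=|\sign_Q(\cq_\tau)|$ at every ordering $Q\in X_\cf$.

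With these three inputs in hand, the assertions of the proposition follow by direct application of Proposition~\ref{functionfield.prop}: item (a)(i) yields (1), item (a)(ii) yields (2), and item (b) yields (3). I expect the main obstacle to be the Clifford invariant step: whereas the equality of discriminants is immediate from Karpenko's lemma, the equality of Clifford invariants for motivic equivalent quadratic forms in $I^2$ requires descending one further level in the filtration by powers of the fundamental ideal, which is precisely where Hoffmann's refinement must be cited with care.
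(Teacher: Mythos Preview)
Your proposal is correct and follows essentially the same route as the paper: reduce to the quadratic forms $\cq_\sigma$ and $\cq_\tau$ over the index reduction field via Proposition~\ref{equiv.prop}, invoke Hoffmann's results on invariants of motivic equivalent quadratic forms, and conclude with Proposition~\ref{functionfield.prop}. The only difference is cosmetic: the paper reaches condition~(3) of Proposition~\ref{equiv.prop} directly (implicitly using Theorem~\ref{sousmain.thm} to pass from $\sigma\overset{m}{\sim}\tau$ to motivic equivalence of the groups), whereas you detour through Corollary~\ref{Vishik.cor} and Lemma~\ref{Charles.lem}; both paths rest on Theorem~\ref{sousmain.thm} and arrive at the same place.
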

\begin{proof}
If $\sigma$ and $\tau$ are motivic equivalent, then by Proposition~\ref{equiv.prop}, the quadratic forms $\cq_\sigma$ and $\cq_\tau$ also are motivic equivalent.
Hence, by~\cite[Cor.2.2, Lem.3.1]{Hoffmann}, we have $d(\cq_\sigma)=d(\cq_\tau)$, $c\bigl((\cq_\sigma)_{\ck}\bigr)=c\bigl((\cq_\tau)_{\ck}\bigr)$, where $\ck/\cf_{A,t}$ is the discriminant quadratic extension, and $|\sign_Q(\cq_\sigma)|=|\sign_Q(\cq_\tau)|$ for all $Q\in X_{\cf_{A,t}}$. The result follows by Proposition~\ref{functionfield.prop}.
\end{proof}

With this in hand, we get that motivic equivalent involutions actually are isomorphic over any field $F$ over which involutions are classified by their invariants. Let $F$ be a formally real field. Recall that the space of orderings $X_F$ is a topological space. Moreover, for all $a\in F^\times$, the so-called Harrison set \[H(a) = \{P \in X_F, a >_P 0\}\] is both closed and open in $X_F$ (see eg~\cite{P}). 
The field  $F$ is called SAP if, conversely, each closed and open subset of $X_F$ is a Harrison set, that is any prescription of signs at each ordering given by a partition of $X_F$ into two closed and open subsets is attained by some $a\in F^\times$. If in addition all formally real quadratic extensions of $F$ are SAP, the field $F$ is called ED. 
Applying the classification theorems given by Lewis and Tignol in~\cite{LT}, we get :
\begin{thm}
Let $F$ be either a non formally real field of cohomological dimension $\leq 2$, or a formally real field with virtual cohomological dimension $\leq 2$ and satisfying the ED property.
Let $(A,\sigma,\tau)$ be an algebra with two involutions of the same type over $F$.
If $\sigma$ and $\tau$ are motivic equivalent, then they are isomorphic.
\end{thm}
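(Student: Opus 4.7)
The plan is to reduce this theorem to a combination of Proposition~\ref{invariants.prop}, which shows that motivic equivalent involutions share all the standard invariants (discriminant and Clifford invariant in even orthogonal type, discriminant algebra in even unitary type, and signatures in all types), with the classification theorems of Lewis--Tignol~\cite{LT}, which assert that under the stated hypotheses on $F$ these invariants are a complete set of invariants for involutions on a fixed central simple algebra.

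First, I would apply Proposition~\ref{invariants.prop} to $(A,\sigma,\tau)$ to extract the equalities $d_\sigma = d_\tau$ and $c(\sigma_K) = c(\tau_K)$ in orthogonal type (when $A$ has even degree), the equality $[\cd(\sigma)] = [\cd(\tau)]$ in unitary type, together with $\sign_P(\sigma) = \sign_P(\tau)$ for every $P \in X_F$ in all types. The odd degree orthogonal case and any symplectic/unitary case where the listed cohomological invariants are not defined are handled directly by the signature comparison, which is always available.

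Next, I would invoke the appropriate classification theorem from~\cite{LT} in each type separately: the assumption $\mathrm{cd}(F) \leq 2$ (resp.\ $\mathrm{vcd}(F) \leq 2$ together with ED) is precisely what Lewis and Tignol require to ensure that two involutions of the same type on the same algebra which agree on all of these invariants are conjugate. In the formally real case the ED hypothesis is what lets the SAP property pass to the discriminant quadratic extension in the orthogonal case and to the center in the unitary case, which is exactly where the Hasse principle of Bayer--Parimala is applied in~\cite{LT}. Combining this with the previous step yields $\sigma \simeq \tau$.

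The main subtlety is bookkeeping: one has to make sure that in each type the list of invariants produced by Proposition~\ref{invariants.prop} matches exactly the list required in the relevant Lewis--Tignol classification theorem, and that the field hypothesis used by Lewis--Tignol coincides with the one stated here. No substantial new argument is needed beyond this dictionary; the bulk of the work has already been done in Proposition~\ref{invariants.prop}, whose proof itself leverages Hoffmann's theorem~\cite{Hoffmann} via the generic index reduction process of \S\ref{functionfield.section}.
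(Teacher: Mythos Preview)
Your proposal is correct and follows essentially the same approach as the paper: apply Proposition~\ref{invariants.prop} and then the Lewis--Tignol classification theorems. The only point the paper makes more explicit is that the ``bookkeeping'' in the orthogonal case amounts to a small lemma showing that $d_\sigma=d_\tau$ together with $c(\sigma_K)=c(\tau_K)$ is equivalent to $\cc(\sigma)\simeq\cc(\tau)$ as $F$-algebras, which is the form of the Clifford invariant used in~\cite{LT}.
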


\begin{proof} The theorem follows immediately from~\cite[Thm.A \& Thm.B]{LT}, up to the following lemma.
\end{proof}
\begin{lem}
Let $\sigma$ and $\tau$ be two orthogonal involutions on a central simple algebra $A$ over $F$. Their Clifford algebras $\cc(\sigma)$ and $\cc(\tau)$ are isomorphic as $F$-algebras if and only of $d_\sigma=d_\tau$ and $c(\sigma_K)=c(\tau_K)$, where $K/F$ is the discriminant quadratic extension.
\end{lem}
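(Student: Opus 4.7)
The plan is to establish the two implications separately, and to split the converse into two cases according to whether or not the discriminant is trivial. For the direct implication, any $F$-algebra isomorphism $\varphi : \cc(\sigma) \to \cc(\tau)$ restricts to an $F$-algebra isomorphism of the respective centers $K_\sigma \to K_\tau$, so these quadratic \'etale $F$-algebras are isomorphic, whence $d_\sigma = d_\tau$. Writing $K$ for this common discriminant extension and tensoring with $K$ gives an $F$-algebra isomorphism $\cc(\sigma)\otimes_F K \simeq \cc(\tau)\otimes_F K$. Since the left-hand side is canonically isomorphic to $\cc(\sigma_K) = \cc_+(\sigma_K)\times \cc_-(\sigma_K)$ as a $K\otimes_F K \simeq K\times K$-algebra, and similarly for $\tau$, this $F$-algebra isomorphism either preserves or swaps the two factors. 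In either case the unordered pair of Brauer classes $\{[\cc_+(\sigma_K)],[\cc_-(\sigma_K)]\}$ coincides with $\{[\cc_+(\tau_K)],[\cc_-(\tau_K)]\}$ in $\br(K)$, yielding $c(\sigma_K) = c(\tau_K)\in \br(K)/\qform{[A_K]}$.

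For the converse, assume first $d_\sigma = d_\tau = 1$, so that $K = F \times F$ and each Clifford algebra is a product of two CSAs over $F$, all of $F$-dimension $2^{n-2}$ with $n = \deg A$. The fundamental relation $[\cc_+(\tau)]-[\cc_-(\tau)] = [A]$, together with $c(\sigma) = c(\tau)\in\br(F)/\qform{[A]}$, shows that after possibly exchanging $\cc_+(\tau)$ and $\cc_-(\tau)$ the Brauer classes of corresponding components agree. Since CSAs of equal $F$-dimension and Brauer class are isomorphic, this yields $\cc(\sigma)\simeq \cc(\tau)$ as $F$-algebras.

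When $K = F(\sqrt{d_\sigma})$ is a quadratic field extension, with nontrivial Galois automorphism $\iota$, the key observation is that the canonical decomposition $K\otimes_F K \simeq K\times K$ identifies $\cc(\sigma)\otimes_F K \simeq \cc(\sigma_K)$ with $\cc(\sigma) \times {}^\iota\cc(\sigma)$, hence $\{[\cc_+(\sigma_K)],[\cc_-(\sigma_K)]\} = \{[\cc(\sigma)], \iota^*[\cc(\sigma)]\}$ in $\br(K)$, and likewise for $\tau$. Since $[A_K]$ has order dividing $2$, the fundamental relation can be rewritten as $\iota^*[\cc(\sigma)] = [\cc(\sigma)] + [A_K]$. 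The hypothesis $c(\sigma_K) = c(\tau_K)$ thus forces either $[\cc(\sigma)] = [\cc(\tau)]$ or $[\cc(\sigma)] = \iota^*[\cc(\tau)]$ in $\br(K)$. As both $K$-algebras have the same $K$-dimension $2^{n-2}$, in the first case they are isomorphic as $K$-algebras, and in the second $\cc(\sigma)\simeq {}^\iota\cc(\tau)$ as $K$-algebras, which still provides an $F$-algebra isomorphism $\cc(\sigma)\simeq \cc(\tau)$. The main technical input is the identification of the two factors of $\cc(\sigma)\otimes_F K$ with $\cc(\sigma)$ and its Galois twist; everything else is Brauer-group bookkeeping and the standard fact that two CSAs of equal degree and equal Brauer class are isomorphic.
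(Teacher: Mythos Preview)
Your proof is correct and follows essentially the same approach as the paper's. The only difference is presentational: you split the converse into the cases $K=F\times F$ and $K$ a field and invoke the fundamental relation and the ``equal degree $+$ equal Brauer class $\Rightarrow$ isomorphic'' principle explicitly, whereas the paper handles both cases uniformly by writing $\cc(\sigma)\otimes_F K\simeq \cc(\sigma)\times\overline{\cc(\sigma)}$ (your ${}^\iota\cc(\sigma)$) and concluding directly that $\cc(\tau)$ is isomorphic to $\cc(\sigma)$ or its conjugate as $K$-algebras, hence to $\cc(\sigma)$ as an $F$-algebra. One small imprecision: after tensoring $\varphi$ with $K$ you call the result an $F$-algebra isomorphism, but it is in fact $K$-linear, which is what you actually need to conclude that the induced bijection on simple factors preserves the Brauer classes in $\br(K)$.
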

\begin{proof}
Assume first that $\cc(\sigma)$ and $\cc(\tau)$ are isomorphic as $F$-algebras. They have isomorphic centers, therefore $d_\sigma=d_\tau$. Moreover, extending scalars from $F$ to $K$, we have $\cc(\sigma_K)\simeq\cc(\tau_K)$, where $K/F$ is the quadratic discriminant extension. Hence
$\cc_+(\sigma_K)\times\cc_-(\sigma_K)$ and  $\cc_+(\tau_K)\times\cc_-(\tau_K)$ are isomorphic as $K$-algebras. It follows that
$\cc_+(\sigma_K)$ is isomorphic either to $\cc_+(\tau_K)$ or to $\cc_-(\tau_K)$, and in both cases, we get $c(\sigma_K)=c(\tau_K)\in\br(K)/\qform{[A_K]}$.

Assume conversely that $d_\sigma=d_\tau$ and $c(\sigma_K)=c(\tau_K)\in\br(K)/\qform{[A_K]}.$ Since the center of $\cc(\sigma)$ is $K$, the Clifford algebra of $\sigma_K$ is
\[\cc(\sigma_K)\simeq \cc(\sigma)\otimes_F K\simeq \cc(\sigma)\times \overline{\cc(\sigma)},\]
where $\overline{\cc(\sigma)}$ denotes the conjugate algebra, and similarly for $\cc(\tau_K)$.
Hence, the assumption $c(\sigma_K)=c(\tau_K)\in\br(K)/\qform{[A_K]}$, says that $\cc(\tau)$ is isomorphic either to $\cc(\sigma)$ or to its conjugate, which precisely means that they are isomorphic as $F$-algebras.
\end{proof}

\begin{rem}
As we already mentioned, in the odd degree orthogonal case, Izhboldin has proved a much stronger result, namely that two odd dimensional motivic equivalent quadratic forms are similar. Hence, no assumption on the base field is required in this case.
In some even degree cases, we can slightly weaken the assumption on the base field as follows, by~\cite[Thm.A, Thm.B]{LT} :
\begin{enumerate}
\item For non formally real fields, $I^3(F)=0$ is enough in symplectic and orthogonal types;
\item For formally real fields, $I^3F(\sqrt{-1})=0$ and $F$ SAP is enough for symplectic involutions;
\item For formally real fields, $I^3F(\sqrt{-1})=0$ and $F$ ED is enough for orthogonal involutions on even-degree algebras.
\end{enumerate}
\end{rem}

\begin{rem}
By Springer's theorem, a quadratic form is isotropic over an odd degree extension of the base field if and only if it is isotropic. Equivalently, we have $i_{w,2}(q)=i_w(q)$ for all quadratic form $q$. It is not known whether this also holds for involutions, even over a field satisfying the conditions of the theorem above. Under those conditions, though, we have the following weaker assertion : 
If $(A,\sigma,\tau)$ is an algebra with two involutions of the same type over a field $F$ which is either a non formally real field of cohomological dimension $\leq 2$, or a formally real field with virtual cohomological dimension $\leq 2$ and satisfying the ED property, then 
\[\forall M/F,\ i_{w,2}(\sigma_M)=i_{w,2}(\tau_M)\ \Rightarrow\ \forall M/F,\ i_{w}(\sigma_M)=i_{w}(\tau_M).\]
Indeed, the left condition guarantee that the involutions $\sigma$ and $\tau$ are motivic equivalent by~\cite{dC} (see Lemma~\ref{Charles.lem}). By the above theorem, this implies that the involutions are isomorphic, hence they do have the same Witt index over any extension of the base field. 
\end{rem}

\section{Examples of critical varieties for some exceptional groups}

\setlength{\unitlength}{.5cm}
\newsavebox{\Evi}
\savebox{\Evi}(5,2){\begin{picture}(5,2)
    % put in the circles
    \multiput(0.5,0.5)(1,0){5}{\circle*{\darkrad}}
    \put(2.5,1.5){\circle*{\darkrad}}

    % put in the lines
    \put(0.5,0.5){\line(1,0){4}}
    \put(2.5,1.5){\line(0,-1){1}}\end{picture}}

\label{exceptional.section}

The aim of this last section is to study critical varieties for the class of so-called \emph{tractable} semisimple algebraic groups, which includes many groups of exceptional types. As far as classical groups and algebras with involutions are concerned the main homological torsion prime of interest is $p=2$. Yet we now will consider semisimple groups with odd torsion primes and thus recall some of the notations of \cite{dC}. 

Until the end of this section we will only consider for the sake of simplicity semisimple groups of inner type, for which the list of torsion primes is given in \cite{dCG}. As a direct consequence of the Bruhat decomposition, all twisted flag $G$-varieties are critical modulo $p$ if $p$ is not a torsion prime for $G$, or more generally if $G$ is \emph{$p$-split} -which means $G$ splits over a field extension $E/F$ of prime-to-$p$ degree-. Note that as the tables of \cite{dCG} show, the $p$-splitting of exceptional groups often follow a rather basic pattern, motivating the following definition.

\begin{defi}\label{tractable}
The type $T_n$ of an absolutely simple algebraic group is called \emph{$p$-tractable} if the set of possible values of the Tits $p$-indexes of groups of type $T_n$ has cardinality $2$.
\end{defi}

Seemingly restrictive at first glance, the preceding definition covers the following important examples :

\begin{enumerate}
        \item the types $F_4$ and $E_7$ are $3$-tractable;
    \item the types $G_2$ and $E_6$ are $2$-tractable;
    \item the type $E_8$ is $5$-tractable.
\end{enumerate}

\begin{table}[hbt]
\begin{picture}(5,2)
\put(0,0){\usebox{\Evi}}    % put in the open circles
    \put(0.5,0.5){\circle{\lrad}}
    \put(4.5,0.5){\circle{\lrad}}
\end{picture}
\hspace{1cm}
\begin{picture}(5,2)
\put(0,0){\usebox{\Evi}}    % put in the open circles
\put(2.5,1.5){\circle{\lrad}}
    \multiput(0.5,0.5)(1,0){5}{\circle{\lrad}}\end{picture}
\caption{The $2$-indexes of the $2$-tractable type $^{1\!}E_6$}
\end{table}

\vspace{-0.5cm}

Recall that a twisted flag $G$-variety is called $p$-isotropic if it carries a zero-cycle of prime-to-$p$ degree, and $p$-anisotropic otherwise. Following our conventions $X_{\Theta,G}$ is $p$-isotropic if and only if $\Theta$ is circled in the Tits $p$-index of $G$. Reformulating definition \ref{tractable}, one may say that $G$ is $p$-tractable if $G$ is not $p$-split and any $p$-anisotropic twisted flag $G$-variety is generically split, in the sense of \cite{PSZ}. Using this, we prove : 

\begin{thm}\label{zzz}
Assume that $G$ is a $p$-tractable semisimple group of inner type. Any $p$-anisotropic twisted flag $G$-variety $X_{\Theta,G}$ is critical for $G$ modulo $p$.
\end{thm}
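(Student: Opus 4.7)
The plan is to translate the hypothesis $M_{\Theta,G}\simeq M_{f(\Theta),G'}$ into an equality of higher Tits $p$-indexes of $G$ and $G'$ and then invoke \cite[Thm.~16]{dC} (i.e.\ the mechanism underlying Lemma~\ref{Charles.lem}), which is precisely the characterization of motivic equivalence in terms of higher Tits $p$-indexes. Since $G$ is of inner type the Galois action on the Dynkin diagram is trivial and $f$ is just a diagram automorphism; by $p$-tractability the only possible Tits $p$-indexes of an inner twisted form of the quasi-split group, over any extension of the base field, are the split one and one distinguished non-split index $I_0$.

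The key observation I would record first is that for a smooth projective variety $X$ over a field $L$, being $p$-isotropic is equivalent to $\mf_p(0)$ occurring as a direct summand of $M(X;\mf_p)$: a zero-cycle of degree $1 \bmod p$ produces the point/fundamental-class projector realizing such a summand, and conversely the projector onto any $\mf_p(0)$-summand pushes forward to a zero-cycle of degree $1$ modulo $p$. Hence $p$-isotropy over $L$ depends only on the isomorphism class of the Chow motive with $\mf_p$-coefficients.

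Starting from this, for any extension $L/F$ the hypothesis yields, after scalar extension, an isomorphism $M(X_{\Theta,G_L};\mf_p) \simeq M(X_{f(\Theta),G'_L};\mf_p)$, so $X_{\Theta,G_L}$ is $p$-isotropic if and only if $X_{f(\Theta),G'_L}$ is. Since $X_{\Theta,G}$ is assumed $p$-anisotropic and the type is $p$-tractable, $\Theta$ is not circled in the unique non-split index $I_0$; by invariance of $I_0$ under diagram automorphisms (which is automatic for $G_2,F_4,E_7,E_8$ since they admit no non-trivial Dynkin automorphism, and is visible in the table of $2$-indexes for $^{1\!}E_6$), the same holds for $f(\Theta)$. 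Consequently $X_{\Theta,G_L}$ is $p$-isotropic iff $G_L$ is $p$-split, and similarly $X_{f(\Theta),G'_L}$ is $p$-isotropic iff $G'_L$ is $p$-split. Combining, $G_L$ is $p$-split if and only if $G'_L$ is, for every extension $L/F$.

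By $p$-tractability this binary information determines the full Tits $p$-index of $G_L$ and $G'_L$ over every $L$, so $G$ and $G'$ share the same higher Tits $p$-indexes under $f$, and \cite[Thm.~16]{dC} delivers motivic equivalence modulo $p$. The only subtle point in the argument is the invariance of $I_0$ under the relevant Dynkin automorphism, which boils down either to triviality of the diagram automorphism group or to a direct inspection of the non-split $p$-index in the $^{1\!}E_6$ case; once this is noted, the argument is uniform across all $p$-tractable inner types.
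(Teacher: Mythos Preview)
Your proof is correct and follows essentially the same strategy as the paper: use the motive isomorphism over every extension $L/F$ to deduce that $G_L$ is $p$-split iff $G'_L$ is, conclude that the higher Tits $p$-indexes coincide by $p$-tractability, and finish with the characterization theorem from \cite{dC}.

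There are two minor differences worth noting. First, you detect $p$-splitness via the presence of the Tate summand $\mf_p(0)$ (equivalently, $p$-isotropy of $X_{\Theta,G_L}$), whereas the paper phrases it in terms of whether the whole motive decomposes into Tate motives; in the $p$-tractable setting these criteria are equivalent, and your formulation is arguably the cleaner one since it ties directly to zero-cycles. Second, you explicitly address the diagram automorphism $f$, verifying that the non-split index $I_0$ is $f$-invariant so that $f(\Theta)$ is also uncircled in $I_0$; the paper's proof tacitly takes $f(\Theta)=\Theta$, which is automatic for $G_2,F_4,E_7,E_8$ but does need your observation for $^{1\!}E_6$. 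This extra care on your part is a small but genuine improvement over the paper's write-up.
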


\begin{proof}
Let $G$ and $G'$ be inner twisted forms of the same quasisplit group, and assume their common type is $p$-tractable. 
Let $\Theta\subset\Delta$ be such that $X_{\Theta,G}$ is $p$-anisotropic, that is there is at least one element in $\Theta$ which is not circled in the Tits $p$-index of $G$. 
Assume that $M(X_{\Theta,G};p)$ and $M(X_{\Theta,G'};p)$ are isomorphic. 
Then, in particular, $G'$ is non-split, so
$G'$ and $G$ have the same Tits $p$-index and $X_{\Theta, G'}$ also is $p$-anisotropic. 
Consider an extension $K$ of the base field $F$. If both motives are non-split over $K$, then the groups are non split, hence have the same Tits $p$-index as over the base field, since there is only one non split possible value. Otherwise, both motives and both groups are split over $K$. Therefore, $G$ and $G'$ have the same higher Tits $p$-indexes.  By~\cite[Thm 15]{dC}, this proves that $G$ and $G'$ are motivic equivalent, hence $X_{\Theta, G}$ is critical. 
\end{proof}

\end{document}